\title[Poset-enriched pretoposes and compact ordered spaces]{Poset-enriched pretoposes\\and compact ordered spaces}
\author{J\'er\'emie Marqu\`es}
\author{Luca Reggio}
\address{Dipartimento di Matematica ``Federigo Enriques'', Universit\`a degli Studi di Milano, via Saldini 50, 20133 Milano, Italy}
\email{contact@jeremie-marques.name}
\email{luca.reggio@unimi.it}
\date{\today}
\begin{document}

\maketitle

\begin{abstract}
We provide a characterisation of the category $\KOrd$ of Nachbin's compact ordered spaces as a poset-enriched category. Up to equivalence, $\KOrd$ is the only non-degenerate poset-enriched pretopos whose terminal object is a (discrete) generator and in which every object is covered by an \emph{order-filtral} object. Order-filtral objects satisfy an appropriate form of compactness and separation. Throughout, we make extensive use of the internal language of poset-enriched pretoposes.
\end{abstract}


\section{Introduction}

Compact ordered spaces, introduced by Nachbin in~\cite{Nachbin1965}, are compact topological spaces equipped with a partial order that is closed in the product topology. The latter condition ensures compatibility between the topology and the order, and implies that every compact ordered space is Hausdorff. The natural morphisms between compact ordered spaces are the continuous order-preserving maps. We denote by $\KOrd$ the resulting category. 
Every compact Hausdorff space can be viewed as a compact ordered space with the trivial order, which gives rise to a fully faithful functor $\KH \into \KOrd$, from the category $\KH$ of compact Hausdorff spaces and continuous maps, into $\KOrd$. 
In logic, compact ordered spaces naturally arise as spaces of (model-theoretic) types of theories in coherent first-order logic, and extensions thereof; see e.g.~\cite{vGM2024}.

In~\cite{mr}, the category $\KH$ was characterised, up to equivalence, as the unique non-degenerate pretopos $\C$ such that its terminal object is a generator (equivalently, $\C$ is well-pointed and admits copowers of its terminal object), and each of its objects is covered by a \emph{filtral} object. An object of a coherent category is filtral if its lattice of subobjects is canonically isomorphic to the filter completion of its Boolean center \cite[Definition~4.1]{mr}. In $\KH$, filtral objects coincide with Stone spaces, i.e.\ totally disconnected compact Hausdorff spaces. The property of having enough filtral objects distinguishes $\KH$ from the category $\Set$ of sets and functions. The latter is also a non-degenerate pretopos whose terminal object is a generator; however, the filtral objects in $\Set$ are the finite sets, therefore no infinite set is covered by a filtral object.

The main result of this paper is a characterisation, up to equivalence, of the category $\KOrd$ of compact ordered spaces. Our characterisation takes place within the class of \emph{poset-enriched categories}, i.e., categories enriched in the category $\Pos$ of posets and order-preserving maps. Any ordinary category can be regarded as a poset-enriched category, albeit not necessarily in a unique way. This means that we are considering a broader setting. However, the advantage is that we can leverage a richer language. In fact, given a poset-enriched category with finite limits, its ordinary internal language is naturally enriched with a partial order $\leq_{X}$ for each object $X$, and morphisms respect these orders. This viewpoint will be developed in detail in Section~\ref{s:internal-logic}. 

Our main result can be stated as follows (for all undefined terms, see the additional details and references below): 

\begin{theorem}\label{thm:main-KOrd}
A poset-enriched category $\C$ is equivalent to $\KOrd$ if, and only if, it satisfies the following conditions:
\begin{enumerate}[label=(\roman*)]
\item $\C$ is a non-degenerate poset-enriched pretopos;
\item The terminal object of $\C$ is a discrete generator;
\item Every object of $\C$ is covered by an order-filtral object.
\end{enumerate}
\end{theorem}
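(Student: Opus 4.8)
The plan is to prove the two implications separately, treating the verification that $\KOrd$ itself satisfies (i)--(iii) as the routine direction and the reconstruction of an abstract $\C$ from its order-filtral objects as the substantial one. For necessity, I would first check that $\KOrd$ is a poset-enriched pretopos: finite limits and finite coproducts are computed as in topological spaces, equipped with the pointwise and coproduct orders, and poset-enriched exactness reduces to the fact that the quotient of a compact ordered space by a closed, order-compatible equivalence relation is again compact ordered. Non-degeneracy is clear, and the one-point space is a discrete generator because a continuous order-preserving map is determined by its underlying function on points while the terminal object carries the trivial order. The only genuine content is (iii): I would identify the order-filtral objects of $\KOrd$ with the \emph{Priestley spaces} (the totally order-disconnected ones, dual to bounded distributive lattices) and invoke the fact that every compact ordered space is a continuous order-preserving quotient of a Priestley space, so that every object is covered by an order-filtral one.

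For sufficiency, suppose $\C$ satisfies (i)--(iii). The strategy is to build a comparison functor $\C \to \KOrd$ from the global points and then show it is an equivalence. Since the terminal object $1$ is a generator, for each object $X$ the elements of the hom-poset $\C(1,X)$ separate the morphisms out of $X$; discreteness of $1$ guarantees that the order this poset inherits is exactly the restriction of the internal order $\leq_X$. I would topologise $\C(1,X)$ using the subobjects arising from the internal poset-enriched logic, taking as a closed-set subbasis the images of the order-filtral subobjects of $X$. Using the abstract compactness and separation properties built into order-filtrality, I would show that this yields a compact ordered space, and that the assignment $X \mapsto \C(1,X)$ is functorial, faithful (because $1$ generates), and exact.

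The heart of the argument, and the step I expect to be the main obstacle, is the reconstruction of the full subcategory of order-filtral objects of $\C$ as the category of Priestley spaces. Here I would use the internal language to show that an order-filtral object is determined, up to canonical isomorphism, by the bounded distributive lattice of its definable subobjects together with its order, matching Priestley duality between Priestley spaces and bounded distributive lattices; the compactness and separation axioms are precisely what is needed to recover the Priestley topology and order and to make the comparison functor both full and essentially surjective on this subcategory. Controlling the interaction between the lattice of subobjects, the internal order, and the topology of points is the delicate part, and it is where the poset-enriched internal logic developed earlier does the real work.

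Once the equivalence is established on order-filtral objects, I would extend it to all of $\C$ by a density and exactness argument. Condition (iii) presents every object of $\C$ as the quotient of an order-filtral object by an effective equivalence relation, and the same holds in $\KOrd$ relative to Priestley spaces. Since both categories are poset-enriched pretoposes and the comparison functor is exact, it carries these presentations to one another: full faithfulness on order-filtral objects lifts the equivalence relations, and exactness transports the quotients, yielding full faithfulness of the comparison functor in general. Essential surjectivity follows symmetrically, by writing an arbitrary compact ordered space as a quotient of a Priestley space, realising that Priestley space as the image of an order-filtral object of $\C$, and forming the corresponding quotient in $\C$. This identifies $\C$ with $\KOrd$ as poset-enriched categories.
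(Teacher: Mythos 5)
Your proposal goes wrong at the step you yourself flag as the heart of the argument: identifying the full subcategory of order-filtral objects of $\C$ with the category of Priestley spaces. Two things are missing there and neither is routine. First, to make an order-filtral $X$ into the Priestley dual of $\CU(X)$ you need the points $\one\to X$ to biject with \emph{all prime filters} of $\CU(X)$. Order-filtrality plus the compactness/separation it implies only gives you control over codirected families of \emph{upward} complemented subobjects; to hit a non-maximal prime filter $F$ you must find a point lying in every $V\in F$ and outside every $W\notin F$, and the complements $\neg W$ are downward, not upward, so the compactness axiom does not apply to the relevant family. Second, even granting that, essential surjectivity onto Priestley spaces would require every bounded distributive lattice to arise as $\CU(X)$ for some order-filtral $X\in\C$, which nothing in the hypotheses provides. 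Your final ``density and exactness'' extension also leans on a reconstruction-from-a-cover principle that is only valid for \emph{projective} covers (Theorem~\ref{t:presh-emb} and Corollary~\ref{c:proj-cover-equi} need projectivity to get fullness of the nerve); order-filtral objects are not projective in general --- already in $\KOrd$ most Priestley spaces fail to be projective --- so covering every object by an order-filtral one does not by itself let you transport presentations.

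The paper's route avoids all of this by never engaging with the full subcategory of order-filtral objects. It takes the projective cover $\setst{S\cdot\one}{S\in\Set}$ of $\C$ (Lemmas~\ref{l:one-proj} and~\ref{l:projective-discrete-gen-cover}) and the projective cover $\setst{\beta(S)}{S\in\Set}$ of $\KOrd$, and reduces the whole theorem, via Corollary~\ref{c:proj-cover-equi}, to showing that the comparison $\xi_S\colon\C(\one,S\cdot\one)\to\beta(S)$ is a bijection. Because $\CU(S\cdot\one)\cong\P(S)$ is a \emph{Boolean} algebra (Lemma~\ref{lem:compl-up-tensor}), only ultrafilters occur, and there compactness gives surjectivity and separation gives injectivity (Lemma~\ref{l:compact-sep-iso}). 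Order-filtrality enters only once: an order-filtral cover $X\epi S\cdot\one$ is compact and separated (Proposition~\ref{p:filtral-iff-compact-sep}), compactness descends along the surjection, and --- since $S\cdot\one$ is projective --- the cover splits, so separation descends along the resulting injection (Lemma~\ref{l:properties-compact-sep}). If you want to salvage your plan, you should redirect it toward the copowers of $\one$ rather than the order-filtral objects themselves.
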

While the latter characterisation of $\KOrd$ is formally very similar to that of $\KH$ obtained in~\cite{mr}, the key difference lies in the poset-enrichment. Indeed, while $\KOrd$ is not a pretopos in the ordinary sense, $\KH$ is not a poset-enriched pretopos when considered as a poset-enriched category in the obvious way.\footnote{This is ultimately because the notion of congruence in the poset-enriched setting does not coincide with the ordinary one, even for categories with trivial poset-enrichment.} Basic facts concerning poset-enriched categories will be reviewed in Section~\ref{s:preliminaries}.

In Section~\ref{s:internal-logic}, we discuss poset-enriched pretoposes, a simpler version of the notion of \emph{$2$-pretopos} in $2$-category theory. A $2$-pretopos is a (strict) $2$-category that is exact and has universal, disjoint finite coproducts, in an appropriate $2$-categorical sense, as defined by Street in~\cite{Street1982}. A $2$-category in which all pairs of parallel $2$-morphisms are equal is called a \emph{$(1,2)$-category}. While $2$-categories are categories enriched in $\cat{Cat}$, $(1,2)$-categories are categories enriched in the category of preorders and order-preserving maps. By adapting the notion of a $2$-pretopos to this setting, we define \emph{poset-enriched pretoposes}, which can also be seen as $(1,2)$-pretoposes. The $2$-categorical exactness conditions will play a key role in the following. However, we shall only present them in elementary terms for poset-enriched categories, using the internal language.

The notion of a generator in Theorem~\ref{thm:main-KOrd}, namely a \emph{discrete generator}, is weaker than the more standard notion of generator in poset-enriched categories (as used in, e.g.,~\cite{KV2017}). The main difference lies in the use of copowers rather than arbitrary tensors. However, the two notions coincide for poset-enriched exact categories and, a fortiori, for poset-enriched pretoposes. Generators and projective covers are discussed in Section~\ref{s:projectives-and-generators}, where we present a poset-enriched variant of Barr's embedding theorem for poset-enriched regular categories with enough projectives. In particular, if such a category is exact, it can be recovered from any of its projective covers. As an aside, we derive a sufficient criterion for a poset-enriched exact category to be monadic over~$\Pos$.

In Section~\ref{s:well-pointed-pretoposes}, we study poset-enriched pretoposes $\cC$ whose terminal object is a generator. These are precisely the well-pointed cocomplete pretoposes and admit a projective cover consisting of the copowers of their terminal object.
Finally, in Section~\ref{s:char-KOrd}, we introduce the concept of an \emph{order-filtral} object (Definition~\ref{d:order-filtral}), generalising filtrality to the poset-enriched setting. Order-filtral objects in $\KOrd$ are the \emph{Priestley spaces}, i.e., the totally order-disconnected compact ordered spaces~\cite{Priestley1970}. In $\Pos$, they coincide with the finite posets. Order-filtral objects are in particular \emph{compact} and \emph{separated}. If $\C$ has enough order-filtral objects, we show that $\C$ and $\KOrd$ admit equivalent projective covers, and are therefore equivalent. This leads to a proof of Theorem~\ref{thm:main-KOrd}.

\subsection*{Related work}
As an abstract characterisation of a category of ``spaces'', our main result is reminiscent of, and inspired by, Lawvere's Elementary Theory of the Category of Sets~\cite{Lawvere1964,Lawvere2005}. For a $2$-categorical treatment, see the recent paper~\cite{HM2025}. The category of compact Hausdorff spaces has been characterised in~\cite{Richter1991,Richter1992} and, using the pretopos structure, in~\cite{mr}; see also~\cite{BKRT2024} for a constructive approach.

Our work is also related to the study of varieties of ordered algebras, started in~\cite{Bloom1976,BW83}, and of (strongly finitary) monads on $\Pos$. See e.g.\ \cite{KV2017,AR2023,Adamek2025}. Our approach hinges on the use of the internal language of poset-enriched pretoposes. Enriched regular fragments have been recently investigated in a general setting in~\cite{RT25}. 

\subsection*{Acknowledgements}
The second-named author would like to thank Richard Garner for the email discussions that inspired this paper.

\subsection*{Notations} Throughout this paper, the composition of arrows $f\colon X\to Y$ and $g\colon Y\to Z$ in a category is denoted by $fg$. Furthermore, if $P$ is a poset, we write $\abs{P}$ for its underlying set. Whenever convenient, sets are identified with discrete posets.

\section{Preliminaries on poset-enriched categories}\label{s:preliminaries}

A \emph{poset-enriched category} is a category enriched in the category $\Pos$ of posets and order-preserving maps. In other words, it is a category $\cA$ whose hom-sets are equipped with partial orders in such a way that composition is order-preserving. For any two objects $x,y\in \cA$, the poset of arrows $x\to y$ is denoted by $\cA(x,y)$. 
Many of the basic concepts of ordinary category theory extend to enriched categories, and in particular to poset-enriched categories. We refer the reader to \cite{Kelly2005reprint} for a thorough treatment.

A \emph{poset-enriched functor} is a functor $F \colon \cA \to \cB$ between poset-enriched categories such that the maps $\cA(x,y) \to \cB(F(x),F(y))$ are order-preserving. We say that $F$ is \emph{faithful} if these maps are order-embeddings, and \emph{fully faithful} if they are order-isomorphisms. Given two poset-enriched categories $\cA$ and $\cB$, the category of poset-enriched functors $[\cA,\cB]$ is also poset-enriched (although it is not locally small unless $\cA$ is small).

By a \emph{category} or a \emph{functor}, we will always mean a poset-enriched category or a poset-enriched functor. An \emph{ordinary} category or functor is one that is not enriched. We may occasionally still speak of a \emph{poset-enriched} category or functor to emphasise that it is not ordinary. The canonical way to regard a poset-enriched category as an ordinary category is to discard the orders. Conversely, the canonical way to view an ordinary category as a poset-enriched category is to equip its hom-sets with discrete orders.

Given a category $\cA$, the category $\cA^\op$ is obtained by reversing the direction of the arrows, but \emph{not} their order. A functor $\cA^\op \to \Pos$ is called a (poset-enriched) \emph{presheaf}. The hom-functor $\cA^\op \times \cA \to \Pos$ is poset-enriched, and by currying, we obtain the enriched \emph{Yoneda embedding} $\cA \to [\cA^\op,\Pos]$. As in the ordinary case, the Yoneda embedding is fully faithful; see, for example, \cite[\S2.4]{Kelly2005reprint}.

The notions of limit and colimit for poset-enriched categories are direct extensions of the corresponding notions for ordinary categories, yet they do not reduce to them. For example, a product of objects $X$ and $Y$ in a poset-enriched category $\cC$ is an object $X\times Y$ such that, for every object $Z$, the natural maps
\[ \C(Z,X\times Y) \to \C(Z,X) \times \C(Z,Y) \]
are order-isomorphisms. Thus, while $\cC$ may have binary products as an ordinary category, it may not have them as a poset-enriched category because the natural bijections may fail to reflect the order. In the poset-enriched setting, limits and colimits are defined by requiring the associated natural bijections to be order-isomorphisms.

In a poset-enriched category $\cC$, the usual limits, built from equalisers and products, are called \emph{conical} and are a special case of \emph{weighted} limits. Dually, conical colimits are a special case of weighted colimits. We recall some examples of non-conical limits and colimits that will be relevant later on, by describing the presheaves that they represent:
\begin{itemize}
	\item The \emph{lax pullback}\footnote{Lax pullbacks are often referred to as \emph{comma squares} in the literature. The dual notion of \emph{lax pushout} is often called \emph{cocomma square}.} of arrows $f \colon X \to Y$ and $g \colon Z \to Y$ represents the pairs of arrows $(u \colon T \to X, v \colon T \to Z)$ such that $uf \leq vg$.
	\item The \emph{lax kernel} of $f \colon X \to Y$ is the lax pullback of $f$ along itself, and is denoted by $\lker(f)$. It represents the pairs of arrows $u,v \colon T \to X$ such that $uf \leq vf$. 
	\item The \emph{coinserter} (or ``lax coequaliser'') of arrows $f,g \colon X \rightrightarrows Y$ represents the arrows $u \colon Y \to T$ such that $fu \leq gu$.
	\item Given a poset $P$ and an object $X$, the \emph{tensor} $P\cdot X$ satisfies the universal property
	\[ \forall Y\in \cC, \ \ \cC(P\cdot X,Y) \cong \Pos(P,\cC(X,Y)) . \]
	If $S$ is a set, we shall refer to $S\cdot X$ as a \emph{copower} of $X$ indexed by $S$.
\end{itemize}

General weighted limits are obtained as follows. We start with a diagram $F \colon \cI \to \cC$ where $\cI$ is an ordinary category (in the general theory of enriched categories, $\cI$ is also enriched, but this is not necessary in our context). The \emph{(conical) limit} of $F$ represents the families of arrows $T \to F(i)$, with exactly one arrow for each $i \in \cI$, such that for each $f \colon i \to j$ in $\cI$, the arrow $F(f)$ sends $T \to F(i)$ to $T \to F(j)$. In a weighted limit, the single arrow $T \to F(i)$ is replaced by a family of arrows $T \to F(i)$ indexed by a given poset, and we specify how each $F(f)$ acts on these arrows. Formally, this data is specified by giving a \emph{weight} functor $W \colon \cI \to \Pos$, where $W(i)$ is the indexing poset of the arrows $T \to F(i)$. The \emph{weighted limit} $\lim\nolimits^W F$ satisfies the following universal property, where $\operatorname{\mathsf{Nat}}_i$ denotes the transformations natural in $i$:\footnote{That is, $\operatorname{\mathsf{Nat}}_i(W(i),\cC(T,F(i)))\coloneqq [\cI, \Pos](W(-), \cC(T,F(-)))$.}
\[ \cC(T,\lim\nolimits^W F) \cong \operatorname{\mathsf{Nat}}_i(W(i),\cC(T,F(i))). \]

Below, we will make use of the notion of a \emph{finite} weighted limit. This requires that each weight $W(i)$ be a finite poset and that the indexing category $\cI$ be \emph{finitely presented}, meaning it admits a presentation in terms of finitely many objects, finitely many arrows, and finitely many equations. Note that a finitely presented category may have infinitely many arrows. Alternatively, we can obtain the concept of a finite weighted limit by replacing the category~$\cI$ with a finite graph in the above definition. In Remark~\ref{rmk:weighted-lim-internal}, we will describe finite weighted limits using the internal language.

\section{Poset-enriched pretoposes and their internal logic}\label{s:internal-logic}

In this section, we recall the notion of poset-enriched pretopos. As mentioned in the Introduction, this is a simpler version of the concept of $2$-pretopos, i.e.\ an exact $2$-category with universal, disjoint finite coproducts. These $2$-categorical exactness conditions were introduced in~\cite{Street1982}; for the poset-enriched case, see~\cite{KV2017,AV2022}. 

Rather than recalling the general $2$-categorical definition, we will introduce poset-enriched pretoposes and their internal language in an elementary way. This approach is guided by the analogy with ordinary pretoposes, which are syntactic categories of first-order coherent theories. First, we will review the internal language of an ordinary category with finite limits. For a poset-enriched category with finite limits, we will see that its (ordinary) internal language is further equipped with a distinguished partial order for each object and that morphisms preserve these orders. By progressively adding more structure, we arrive at the notion of a poset-enriched pretopos. At each step, we will explain how the internal language is enriched or modified.

\subsection{Finite limits}
\label{subsec:fin-lim}

We briefly recall the internal logic of categories with finite limits, that is, \emph{finite-limit} or \emph{cartesian} logic. For more details, the reader can consult \cite[Chapter~D1]{Elephant2}. Formulas of cartesian logic for a given first-order signature are formed in the usual way by using the logical operations of conjunction $\phi \land \psi$ and the equality of variables $x=y$, in addition to the truth constant $\top$.\footnote{For simplicity, we leave aside provably unique existential quantification, which is needed to obtain an exact correspondence between formulas and subobjects when constructing the syntactic category of a cartesian theory. However, this is not necessary here.}

Let $\C$ be an ordinary category with finite limits. The \emph{internal language} of $\C$ consists of a multi-sorted first-order signature, together with a canonical interpretation in $\C$ of cartesian formulas on that signature. The signature is defined as follows:
\begin{itemize}
	\item the sorts are the objects of $\C$;
	\item the predicates of type $X_1 \times \cdots \times X_n$ are the subobjects of $X_1 \times \cdots \times X_n$ in $\C$, i.e., the equivalence classes of monomorphisms with this codomain;
	\item the operations of type $X_1 \times \cdots \times X_n \to Y$ are the morphisms in $\C$ with matching domain and codomain.
\end{itemize}
Each term $t$ in this signature comes equipped with a \emph{context}, which is a finite family of typed variables $(x_i \tcolon X_i)_i$ that contains all the variables appearing in $t$. Similarly, each cartesian formula $\phi$ comes with a context containing all the variables that occur freely in $\phi$. However, not every variable in the respective context needs to occur in $t$, or freely in $\phi$. Given a term $t$ in context $(x_i\tcolon X_i)_i$, its interpretation $\llbracket t\rrbracket \colon \prod_i X_i \to Y$ is defined inductively: $\llbracket x_j\rrbracket \colon \prod_i X_i \to X_j$ is the canonical projection, and $\llbracket f(t_1,\dots,t_n) \rrbracket$ is the composite of $\llbracket t_1\rrbracket \times \dots \times \llbracket t_n\rrbracket$ and $\llbracket f\rrbracket$. Given a formula $\phi$ in context $(x_i\tcolon X_i)_i$, its interpretation $\llbracket \phi\rrbracket$ is the subobject of $\prod_i X_i$ defined inductively as follows:
\begin{itemize}
	\item when $R$ is a predicate, $\llbracket R(t_1,\dots,t_m) \rrbracket$ is the pullback of $R$ by $\llbracket t_1\rrbracket \times \dots \times \llbracket t_m\rrbracket$, where $t_1,\dots,t_m$ is a list of terms in context $(x_i \tcolon X_i)_i$;
	\item conjunctions are interpreted as pullbacks (i.e., as intersections of subobjects);
	\item equality is interpreted as the diagonal $\langle\id,\id\rangle \colon X \into X\times X$, written $\Delta_X$.
\end{itemize}

Given two formulas $\phi$ and $\psi$ in the same context, we write 
\[
\phi \proves \psi
\] 
provided that $\llbracket \phi \rrbracket \leq \llbracket \psi \rrbracket$ as subobjects. We sometimes write $\proves \phi$ instead of $\top \proves\phi$. 

An important feature of the internal language is the following \emph{principle of compositionality}. Let $\phi$ be a formula in context $x_1\tcolon X_1,\dots,x_n\tcolon X_n$. Let $t_1,\dots,t_n$ be terms with common domain, and respective codomains $X_1,\dots,X_n$. We denote by $\phi[t_1/x_1,\dots,t_n/x_n]$ the formula obtained by simultaneously substituting $x_i$ by $t_i$ in $\phi$. 
The interpretation $\llbracket \phi \rrbracket$ of $\phi$ is a subobject of $X_1\times\cdots\times X_n$, so it can be seen as a predicate of this type, from which we can build the formula $\llbracket \phi \rrbracket(t_1,\dots,t_n)$. Then
\[ \llbracket \phi[t_1/x_1,\dots,t_n/x_n] \rrbracket = \llbracket \llbracket \phi \rrbracket(t_1,\dots,t_n) \rrbracket . \]
In other words, substitutions are realised as pullbacks. This can also be regarded as a principle of substitution of equals for equals: if two formulas are equivalent, they will remain so after a substitution.

\begin{example}
	We can define the concept of an (internal) equivalence relation in the internal language: it is a predicate $x \sim y$ of type $X \times X$ such that $x\sim y \proves y\sim x$, $\proves x\sim x$, and $(x\sim y) \land (y\sim z) \proves x\sim z$. A preorder relation $\leq_X$ on $X$ is defined similarly, but without the symmetry condition. A morphism $f \colon X \to Y$ is said to be \emph{order-preserving} with respect to two preorders $\leq_X$ and $\leq_Y$ if $a \leq_X b \proves f(a) \leq_Y f(b)$.
\end{example}

The internal language provides a concise notation for manipulating subobjects. In practice, we will describe the proofs in plain English where more convenient, manipulating the variables as if they were elements of sets. However, the reader should bear in mind that these proofs can be unpacked. We illustrate this ``unpacking'' with an example.

\begin{example}
Using the internal language, let us show that, in a category $\cC$ with finite limits, the composite of two order-preserving arrows $f \colon X\to Y$ and $g \colon Y\to Z$ is order-preserving. The formal proof is simply 
	\[
	a \leq_X b \proves f(a) \leq_Y f(b) \proves g(f(a)) \leq_Z g(f(b)).
	\] 
The first entailment, which is an inclusion of subobjects in $X\times X$, holds because $f$ is order-preserving. The second entailment is obtained from $u \leq_Y v \proves g(u) \leq_Z g(v)$ by substituting $u$ with $f(a)$, and $v$ with $f(b)$. To interpret this in $\cC$, we start with the inclusion of $[\leq_Y]$ in the pullback of $[\leq_Z]$ along $g\times g$. The substitution corresponds to pulling back once more along $f\times f$. As pullback squares compose, the right-hand side of the inclusion is the pullback of $[\leq_Z]$ along $(fg)\times(fg)$. In terms of the internal logic, the fact that these pullbacks compose is the aforementioned principle of compositionality. Finally, we obtain $a \leq_X b \proves g(f(a)) \leq_Z g(f(b))$ by composing the inclusions.
\end{example}

We will often use the set comprehension notation to make the context of a formula explicit. For instance, 
\[
\setst{(x,y) \in X\times Y}{\phi(x,y)}
\] 
stands for $\llbracket \phi \rrbracket$, where $\phi$ is understood as a formula in context $x\tcolon X, y\tcolon Y$, even though $x$ and $y$ do not necessarily occur freely in $\phi$. 

\begin{remark}\label{rmk:external-to-internal-regular}
All concepts expressible in cartesian logic admit an equivalent formulation using generalised elements (recall that a \emph{generalised element} of $X$ is an arrow $T\to X$). For instance, an internal equivalence relation is often defined as a subobject $R$ of $X\times X$ such that $\C(T,R) \subseteq \C(T,X) \times \C(T,X)$ is an equivalence relation, in the usual set-theoretic sense, for every object $T$. This correspondence between internal and external descriptions holds because the Yoneda embedding preserves (finite) limits and is conservative. For any formula $\phi(x)$, the subobject $\setst{p \in X}{\phi(p)}$ of $X$ represents the sub-presheaf of $\C(-,X)$ that takes $T$ to the set of arrows $p \colon T \to X$ such that $\phi(p)$ is true when interpreted as an ``external'' statement: conjunctions and equalities are interpreted externally, and $f(p)$ is interpreted as the composite $pf$.
\end{remark}

\subsection{Finite weighted limits} 

Firstly, let us review the interplay, in an ordinary category~$\cC$, between the external equality of arrows and the diagonals $\Delta_X\colon X \into X\times X$. The object $X\times X$ represents the pairs of parallel morphisms to $X$, and the diagonal represents the sub-presheaf of those pairs of morphisms that are \emph{equal}:
\[\begin{tikzcd}[row sep=1.6em]
	\C(Y,X\times X) &[-2em] \cong &[-2em] \set{ f,g \colon Y \rightrightarrows X } \\
	\C(Y,X) \ar[u,hook] & \cong & \setst{ f,g \colon Y \rightrightarrows X }{ f=g } \ar[u,hook]
\end{tikzcd}\]

Similarly, when $\C$ is poset-enriched, we require the partial order  
\[
\setst{ f,g \colon Y \rightrightarrows X }{ f \leq g } \subseteq \C(Y,X\times X)
\] 
to be representable by a subobject $[\leq_X] \into X\times X$.

\begin{definition}
	A category $\C$ with finite limits has \emph{epi-diagonals} if, for every $X\in \C$, the presheaf
	\begin{equation*}
		F_{X}\colon \C^{\op}\to\Pos, \ \ Y\mapsto \setst{f, g\colon Y \rightrightarrows X }{ f\leq g}
	\end{equation*}
	is representable. In this case, the object representing $F_X$ is denoted by $[\leq_X] \into X\times X$.
\end{definition}

Note that, if it exists, the epi-diagonal $[\leq_X] \into X\times X$ is a partial order by Remark~\ref{rmk:external-to-internal-regular}. 

We will now assume that $\cC$ has finite limits and epi-diagonals. In particular, it has finite limits as an ordinary category. Furthermore, its internal language as an ordinary category is enriched with a partial order $\leq_X$ for each object $X$. These orders are ``uniform'' in the sense that every morphism $f \colon X \to Y$ is order-preserving. Indeed, we have $x \leq_{X} x' \proves f(x) \leq_{Y} f(x')$, as can be seen by reasoning as in Remark~\ref{rmk:external-to-internal-regular}, replacing $x$ and $x'$ with two parallel arrows in $\cC$. We will simply write $\leq$ instead of $\leq_X$. 

\begin{definition}
A morphism $f \colon X \to Y$ in $\cC$ is an \emph{embedding} if $f(x) \leq f(y) \proves x\leq y$, and an \emph{injection} if $f(x) = f(y) \proves x = y$. Embeddings are denoted by $X\into Y$.
\end{definition}

By Remark~\ref{rmk:external-to-internal-regular}, $f$ is an embedding if and only if the induced map $\C(T,X) \to \C(T,Y)$ is an order-embedding for all $T$; that is, in $2$-categorical terminology, $f$ is an \emph{$\ff$-morphism}, where $\ff$ stands for \emph{(representably) fully faithful}. Similarly, $f$ is an injection precisely when the induced maps $\C(T,X) \to \C(T,Y)$ are injective, i.e., when $f$ is a monomorphism.

\begin{example}\label{ex:split-mono-emb}
	Any split monomorphism $f \colon A \to B$ is an embedding. Just observe that if $g \colon B \to A$ satisfies $fg=\id$, then $f(x) \leq f(y) \proves g(f(x)) \leq g(f(y)) \proves x \leq y$.
\end{example}

We will now impose an important restriction on the internal language of $\C$: rather than allowing the predicates to be arbitrary monomorphisms, we will require them to be embeddings. By structural induction, it can be shown that all formulas are then interpreted as embeddings because embeddings are stable under pullbacks (this can be seen easily using the ordinary internal language). Consequently, we make the following

\begin{definition}
A \emph{subobject} of $X\in \cC$ is an embedding $Y \into X$, modulo isomorphism. This situation is denoted by $Y\subseteq X$.
\end{definition}

The internal language of $\C$ can be subsumed in terms of a \emph{Lawvere doctrine}. To avoid considering large sets, we will assume that each $X\in \cC$ has only a small set of subobjects, up to isomorphism. The posets of subobjects $\Sub(X)$ are then meet semilattices where binary meets are computed by pullbacks. Any arrow $f\colon X\to Y$ induces a morphism of meet semilattices $f^{-1}\colon\Sub(Y)\to\Sub(X)$, which pulls back along $f$. This yields an ordinary functor $\Sub \colon \C^\op \to \MSLat$, which we call the \emph{doctrine of subobjects} of $\C$.

\begin{remark}\label{rmk:weighted-lim-internal}
	Let $\C$ be a category with finite weighted limits. Given a finite graph $I$, a diagram $F \colon I \to \C$ and a weight $W \colon I \to \FinPos$, the weighted limit $\lim^W F$ can be defined in the internal language as
	\[ \setst[\Big]{ \overline{x} = (x_{i,w})_{i,w} \in \prod_{i\in I} \prod_{w \in W(i)} F(i) }{ \phi(\overline{x}) \land \psi(\overline{x})} , \]
	where
	\begin{itemize}
		\item $\phi(\overline{x}) = $ ``$x_{i,w} \leq x_{i,w'}$ for all $i \in I$ and all $w \leq w'$ in $W(i)$'';
		\item $\psi(\overline{x}) = $ ``$F(f)(x_{i,w}) = x_{i,W(f)(w)}$ for all $f \colon i \to j$ in $I$ and all $w \in W(i)$''.
	\end{itemize}
	Since only conjunctions and inequalities are used, this definition can be externalised straightforwardly using generalised elements, as explained in Remark~\ref{rmk:external-to-internal-regular}. We then recover the usual definition of the weighted limit $\lim\nolimits^W F$, as given in Section~\ref{s:preliminaries}.
\end{remark}

\begin{lemma}
	The following statements are equivalent for every category~$\C$:
	\begin{enumerate}[label=(\arabic*)]
		\item $\C$ has finite weighted limits.
		\item $\C$ has finite (conical) limits and epi-diagonals.
	\end{enumerate}
\end{lemma}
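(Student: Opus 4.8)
The plan is to prove the two implications separately. The forward direction $(1)\Rightarrow(2)$ is essentially a matter of exhibiting conical limits and epi-diagonals as particular finite weighted limits, while the reverse direction $(2)\Rightarrow(1)$ carries the real content and is handled via the explicit construction of Remark~\ref{rmk:weighted-lim-internal}.

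For $(1)\Rightarrow(2)$, I would first recall that finite conical limits are, by definition, the finite weighted limits whose weight is constant at the terminal poset $\mathbf{1}$ (as in Section~\ref{s:preliminaries}); hence if $\C$ has all finite weighted limits it has finite conical limits. Next, to produce epi-diagonals, I would realise $[\leq_X]$ as a finite weighted limit: take the finite graph with a single vertex $\ast$ and no edges, the diagram $F$ sending $\ast$ to $X$, and the weight $W$ sending $\ast$ to the two-element chain $\mathbf{2}=\{0<1\}$. A $W$-weighted cone from $T$ is then an order-preserving map $\mathbf{2}\to\C(T,X)$, i.e.\ a pair $f\leq g$ of arrows $T\rightrightarrows X$; thus $\lim^W F$ represents exactly the presheaf $F_X$ defining the epi-diagonal, so $\lim^W F=[\leq_X]$. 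Since $\mathbf{2}$ and the graph are finite, this is a finite weighted limit, and $\C$ has epi-diagonals. (Equivalently, one may observe that $[\leq_X]=\lker(\id_X)$, which is a lax pullback and hence a finite weighted limit.)

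For $(2)\Rightarrow(1)$, I would follow Remark~\ref{rmk:weighted-lim-internal} and build $\lim^W F$ for a finite graph $I$, a diagram $F\colon I\to\C$, and a weight $W\colon I\to\FinPos$ explicitly inside $\C$. Start from the finite product $P=\prod_{i\in I}\prod_{w\in W(i)}F(i)$, which exists because there are finitely many pairs $(i,w)$ and $\C$ has finite conical limits. Then carve out the subobject cut out by the conjunction $\phi\land\psi$. Each monotonicity conjunct ``$x_{i,w}\leq x_{i,w'}$'' (for $w\leq w'$ in $W(i)$) is obtained by pulling back the epi-diagonal $[\leq_{F(i)}]\into F(i)\times F(i)$ along the projection $P\to F(i)\times F(i)$ onto the $(i,w)$- and $(i,w')$-coordinates; this is the one place epi-diagonals are used. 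Each naturality conjunct ``$F(f)(x_{i,w})=x_{j,W(f)(w)}$'' (for an edge $f\colon i\to j$) is an equaliser of two arrows $P\rightrightarrows F(j)$, hence a finite conical limit. Since $I$, the posets $W(i)$, and their orders are all finite, there are only finitely many conjuncts, so their intersection is a finite wide pullback, again a finite conical limit; it is moreover an embedding into $P$ because embeddings are closed under pullback and composition. This produces the object $L\coloneqq\lim^W F$.

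It then remains to check that $L$ has the correct universal property, and this is where I expect the only real care to be needed. Applying $\C(T,-)$ and using that $L\into P$ is an embedding, $\C(T,L)$ is the sub-poset of $\C(T,P)\cong\prod_{i,w}\C(T,F(i))$ consisting of the tuples $(x_{i,w})$ whose externalisations satisfy $\phi$ and $\psi$, i.e.\ families that are monotone in $w$ and natural in $i$. This is precisely $\operatorname{\mathsf{Nat}}_i(W(i),\C(T,F(i)))$, and the identification is an order-isomorphism because the order on $\C(T,L)$ is the restriction of the pointwise order on $\C(T,P)$, which matches the pointwise order on natural transformations. This externalisation step is justified exactly as in Remarks~\ref{rmk:external-to-internal-regular} and~\ref{rmk:weighted-lim-internal}, using that the Yoneda embedding preserves finite limits and is conservative and that embeddings are order-embeddings on generalised elements; naturality of the isomorphism in $T$ is automatic since $L$ is built by representable constructions. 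The main obstacle is thus the bookkeeping: ensuring that the interpretation of the inequality conditions via epi-diagonals matches their intended order-theoretic meaning, and that the resulting bijection of cones genuinely \emph{respects}, rather than merely preserves, the orders.
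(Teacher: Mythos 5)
Your proof is correct and follows essentially the same route as the paper: the paper's proof simply cites Remark~\ref{rmk:weighted-lim-internal} for $(2)\Rightarrow(1)$ and observes that conical limits and epi-diagonals (the latter as $\lker(\id_X)$) are special finite weighted limits for $(1)\Rightarrow(2)$. You have merely unpacked the remark's construction and the universal-property check in full detail, which is consistent with the intended argument.
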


\begin{proof}
By Remark~\ref{rmk:weighted-lim-internal}, if $\cC$ has finite limits and epi-diagonals, then it has finite weighted limits. Conversely, finite limits and epi-diagonals are special cases of finite weighted limits. Note that the epi-diagonal of $X$ can be obtained as the lax kernel of $\id\colon X\to X$. 
\end{proof}

In the poset-enriched setting, the analogue of a left exact, or finite-limit preserving, functor is a functor that preserves all finite weighted limits. Equivalently, it is a functor that preserves finite limits and epi-diagonals.

\subsection{Regular and coherent categories}

The next step is to add existential quantification to the internal logic.
Let $\cC$ be a category with all finite weighted limits. 

\begin{definition}
The \emph{image} of a morphism $f \colon A \to B$ in $\cC$, if it exists, is the smallest subobject $\im(f)$ of $B$ through which $f$ factors; $f$ is a \emph{surjection} if $\im(f) = B$.
\end{definition} 

We will use the notation $A \epi B$ for surjections. Note that if $f \colon A \to B$ factors as a surjection $A \epi S$ followed by an embedding $S \into B$, then $S$ is the image of $f$. 

\begin{lemma}\label{l:surj-emb-iso}
Isomorphisms coincide with the surjective embeddings.
\end{lemma}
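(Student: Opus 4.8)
The plan is to prove the two inclusions separately, relying on the characterisations recorded above: embeddings are exactly the (representably) fully faithful morphisms, injections are exactly the monomorphisms, and split monomorphisms are embeddings by Example~\ref{ex:split-mono-emb}.

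First I would check that every isomorphism is a surjective embedding. That an isomorphism $f\colon X\to Y$ is an embedding is immediate, since it is a split monomorphism. To see that it is a surjection I would show that the only subobject of $Y$ through which $f$ factors is $Y$ itself. So suppose $f=gm$ for an embedding $m\colon S\into Y$ and some $g\colon X\to S$. Put $m'\coloneqq f^{-1}g\colon Y\to S$; then $m'm=f^{-1}gm=f^{-1}f=\id_Y$, so $m$ is split. Since $m$ is an embedding it is in particular a monomorphism, and from $(mm')m=m=\id_S\, m$ we may cancel $m$ to obtain $mm'=\id_S$. Thus $m$ is an isomorphism, i.e.\ it represents the top element of $\Sub(Y)$. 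As $\im(f)$ is the smallest subobject through which $f$ factors, and every such subobject equals the top, this forces $\im(f)=Y$.

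Conversely --- the shorter direction --- I would argue that a surjective embedding is an isomorphism. The key observation is that an embedding $f\colon X\into Y$ is itself a subobject $[f]$ of $Y$, and $f$ factors through $[f]$ trivially via $f=\id_X\, f$. Hence $\im(f)\le[f]$ in $\Sub(Y)$. Surjectivity gives $\im(f)=Y=\top$, so $[f]=\top$; that is, $f$ is isomorphic as a subobject to $\id_Y$, which is precisely to say that $f$ is an isomorphism.

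I expect the only real obstacle to lie in the first direction, where one must rule out factorisations of an isomorphism through an arbitrary embedding; this is handled by the splitting-plus-cancellation argument above. The converse is essentially formal once one notices that an embedding is a subobject of its own codomain. The one point to watch throughout is the diagrammatic composition convention $fg=$ ``first $f$, then $g$'', and in particular the orientation of the two splittings $m'm=\id_Y$ and $mm'=\id_S$.
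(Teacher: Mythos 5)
Your proposal is correct and, for the non-trivial direction, takes essentially the same route as the paper: a surjective embedding $f$ represents the subobject $\im(f)$, which equals the top by surjectivity, so $f$ is an isomorphism. The paper dismisses the converse direction as trivial, while you spell it out (split mono $\Rightarrow$ embedding, plus the splitting-and-cancellation argument for surjectivity); that extra detail is accurate and harmless.
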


\begin{proof}
	For the non-trivial direction, let $f \colon A \into B$ be a surjective embedding. Then $\im(f) = A$ since $f$ is an embedding, and $\im(f) = B$ since $f$ is surjective.
\end{proof}

As in the ordinary setting, surjections are epimorphisms, and they are left-orthogonal to embeddings in the poset-enriched sense:

\begin{lemma}\label{lem:surj-is-so}
	Let $f \colon A \epi B$ be a surjection. Then $\cA(f,-) \colon \cA(B,-) \to \cA(A,-)$ is an order-embedding. Moreover, \eqref{diag:pullback-orthogonal-surj-emb} is a pullback in $\Pos$ for any embedding $i \colon X \into Y$.
	\begin{equation}\label{diag:pullback-orthogonal-surj-emb}
	\begin{tikzcd}[column sep=3em]
		\cA(B,X) \arrow{r}{\cA(f,X)} \arrow{d}[swap]{\cA(B,i)} & \cA(A,X) \arrow{d}{\cA(A,i)} \\
		\cA(B,Y) \arrow{r}{\cA(f,Y)} & \cA(A,Y)
	\end{tikzcd}\end{equation}
\end{lemma}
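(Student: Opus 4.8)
The statement has two parts. First, that $\cA(f,-)$ is an order-embedding for a surjection $f \colon A \epi B$: this is exactly saying that surjections are \emph{epimorphisms in the order-enriched sense}, i.e.\ if $fu \leq fv$ then $u \leq v$ for any $u, v \colon B \to X$. Second, the pullback square \eqref{diag:pullback-orthogonal-surj-emb}, which expresses the lifting/orthogonality between surjections and embeddings.

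My plan is to work in the internal language throughout, using Remark~\ref{rmk:external-to-internal-regular} to pass between the representable-presheaf description of arrows and the internal formulas. For the first claim, I would take $u, v \colon B \to X$ with $fu \leq fv$ and aim to derive $u \leq v$. The natural move is to consider the subobject $S \coloneqq \setst{b \in B}{u(b) \leq v(b)} \subseteq B$, which is the pullback of $[\leq_X] \into X \times X$ along $\langle u, v\rangle \colon B \to X \times X$; it is an embedding since embeddings are stable under pullback. The hypothesis $fu \leq fv$ says precisely that $f$ factors through $S$, i.e.\ $a \in A \proves f(a) \in S$. Since $f$ is a surjection, its image is all of $B$, and as $f$ factors through the embedding $S \into B$, minimality of the image forces $S = B$. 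Translating back, $S = B$ means $\proves u(b) \leq v(b)$, i.e.\ $u \leq v$. This gives that $\cA(f,X)$ is order-reflecting, hence an order-embedding.

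For the pullback square, I would first note that commutativity is automatic and that all four maps are order-embeddings by the first part of the lemma (the horizontal ones) together with the fact that $i$ is an embedding (the vertical ones, by the representable characterisation of embeddings). To check the pullback property in $\Pos$, I must verify two things: that the square is a pullback of \emph{sets} (the underlying-set functor $\Pos \to \Set$ preserves limits), and that the order on the pullback is correctly reflected. For the set-level statement, I take an arrow $g \colon A \to X$ and an arrow $h \colon B \to Y$ with $gi = fh$, and I must produce a unique $k \colon B \to X$ with $fk = g$ and $ki = h$. This is the diagonal-fill-in for the orthogonality of the surjection $f$ against the embedding $i$ in the poset-enriched setting. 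I would again use the image argument: factor the relevant map and use that $f \epi B$ together with $i \into Y$ being an embedding forces the fill-in to exist and be unique.

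The main obstacle is the poset-enriched diagonal-fill-in, i.e.\ showing that a surjection is left-orthogonal to an embedding with respect to the \emph{enriched} lifting property, rather than merely the ordinary (set-level) one; this is what makes the square a pullback in $\Pos$ and not just in $\Set$. The cleanest route is to prove the fill-in internally: from $gi = fh$ one builds the subobject $\setst{b \in B}{\exists! x,\ i(x) = h(b) \text{ matching } g}$ — but existential quantification is not yet available at this stage of the excerpt, so I would instead argue directly via the universal property of the image of $f$, exploiting that $i$ being an embedding makes ``factoring through $i$'' a property stable under the relevant pullbacks. Concretely, the fill-in $k$ is obtained by noting that $\langle g, h\rangle$-type data lands in a pullback object, and that $f$ surjecting onto $B$ lets the factorisation descend along $f$; uniqueness follows because $i$ is an embedding (hence a monomorphism, so $ki = h$ determines $k$), and the \emph{order} on the fill-in is controlled because $f$ is order-reflecting by the first part. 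I expect the order-reflection fact (part one) to be the essential ingredient that upgrades a set-theoretic pullback to a $\Pos$-pullback, so I would prove part one first and lean on it heavily for part two.
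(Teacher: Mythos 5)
Your proposal is correct and follows essentially the same route as the paper: part one via the subobject $\setst{b \in B}{u(b) \leq v(b)}$ and surjectivity of $f$, and part two by factoring through the pullback of $i$ along $h$ (whose projection to $B$ is a surjective embedding, hence an isomorphism) and then invoking part one to handle the order. The only cosmetic difference is that you derive uniqueness of the filler from $i$ being a monomorphism where the paper uses $f$ being an epimorphism; both work.
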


\begin{proof}
	Let $u,v \colon B \rightrightarrows C$ be two parallel morphisms. Suppose that $fu \leq fv$. Then $f$ factors through $[u \leq v] \coloneqq \setst{b \in B}{u(b) \leq v(b)} \subseteq B$. Since $f$ is a surjection, $[u \leq v] = B$, which means that $u \leq v$. This proves the first part of the statement.
	
	We now show that \eqref{diag:pullback-orthogonal-surj-emb} is a pullback. Consider the following commutative square.
	\begin{equation}\label{diag:orth-surj-emb}\begin{tikzcd}
			A \ar[r,"f",->>] \ar[d,"u"'] & B \ar[d,"v"]\\
			X \ar[r,hook,"i"] & Y
	\end{tikzcd}\end{equation}
	The pair $(f\colon A \to B, u\colon A \to X)$ factors through the pullback $B \times_Y X$. The arrow $B\times_Y X \to B$ is an embedding because so is $i$, and it is a surjection because so is $f$. Thus, it is an isomorphism by Lemma~\ref{l:surj-emb-iso}, and we get some $w \colon B \to X$ with $fw=u$. Such a $w$ is unique because $f$ is an epimorphism, and $wi = v$ since $fwi = ui = fv$.
	
Assume we have another commutative square as in~\eqref{diag:orth-surj-emb}, with $(u',v')$ instead of $(u,v)$, and suppose that $u \leq u'$ and $v \leq v'$. Then the diagonal filler $w' \colon B \to X$ obtained from $(u',v')$ satisfies $w \leq w'$ by the first part of the statement, because $fw = u \leq u' = fw'$.
\end{proof}

It follows from the previous lemma that surjections coincide with the $2$-categorical notion of \emph{$\so$-morphisms}, where $\so$ stands for \emph{surjective on objects}.

\begin{definition}
	A category with all finite weighted limits is \emph{regular} if every morphism factors as a surjection followed by an embedding, and surjections are stable under pullbacks. A functor is \emph{regular} if it preserves finite weighted limits and surjections.
\end{definition}

\begin{example}
$\Pos$ is a regular category, in which embeddings are order-embeddings, images are the set-theoretic ones, and surjections are surjective order-preserving maps. More generally, if $\cP$ is a small category, then the category of functors $[\cP,\Pos]$ is regular. Weighted limits and images are computed coordinate-wise in $\Pos$, and a morphism is surjective if, and only if, it is component-wise surjective.
\end{example}

Given an arrow $f\colon X\to Y$ and a subobject $\phi \subseteq X$, the image of the composite $\phi \into X \to Y$ is written $f[\phi]$. In terms of the doctrine of subobjects, this defines a left adjoint $f[-] \colon \Sub(X) \to \Sub(Y)$ to $f^{-1}(-) \colon \Sub(Y) \to \Sub(X)$. As in the ordinary case, poset-enriched regularity can also be defined by requiring that each change-of-base morphism $f^{-1} \colon \Sub(Y) \to \Sub(X)$ has a left adjoint $f[-]$, such that the \emph{Beck--Chevalley condition} is satisfied, i.e., for any pullback square as in~\eqref{diag:BC-1}, the square~\eqref{diag:BC-2} commutes.

\begin{minipage}{0.4\textwidth}
	\begin{equation}\label{diag:BC-1}\begin{tikzcd}
			W \ar[r,"u"] \ar[d,"v"'] \arrow[dr, phantom, "\lrcorner", very near start] & Z \ar[d,"g"] \\
			X \ar[r,"f"'] & Y
	\end{tikzcd}\end{equation}
\end{minipage}\hfill
\begin{minipage}{0.55\textwidth}
	\begin{equation}\label{diag:BC-2}\begin{tikzcd}
			\Sub(W) \ar[r,"{u[-]}"] & \Sub(Z) \\
			\Sub(X) \ar[r,"{f[-]}"'] \ar[u,"v^{-1}"] & \Sub(Y) \ar[u,"g^{-1}"']
	\end{tikzcd}\end{equation}
\end{minipage}

The Frobenius rule $f[\phi \land f^{-1} \psi] = f[\phi] \land \psi$ is obtained as a special case. Hence, $\Sub \colon \cC^\op \to \MSLat$ is an ordinary Lawvere doctrine. This allows us to enrich the internal language with existential quantification as follows. If $\phi(x,\ovl{y})$ is a formula in context $x \tcolon X, (y_i \tcolon Y_i)_i$, then $\exists x \qcolon \phi(x,\ovl{y})$ is a formula in context $(y_i \tcolon Y_i)_i$ and it is interpreted as $\pi[\llbracket \phi(x,\ovl{y}) \rrbracket]$, where $\pi \colon X \times \prod_i Y_i \to \prod_i Y_i$ is the canonical projection. 
As we are working with an ordinary Lawvere doctrine, its usual properties apply. For instance, 
\[ 
f[\phi] = \llbracket \exists x \in X \qcolon f(x)=y \land \phi(x) \rrbracket . 
\]
In particular, in the internal language, $f$ is a surjection precisely when ${\proves \exists x \in X\qcolon f(x) = y}$.

In ordinary regular categories, morphisms correspond to functional relations. The same holds for poset-enriched regular categories, as we will now explain.
A relation $R \subseteq X \times Y$ in a category $\cC$ is \emph{functional} if $\proves \exists y \qcolon R(x,y)$ and $(x = x') \land R(x,y) \land R(x',y') \proves y = y'$. We say that $R$ is \emph{order-preserving} if, in addition,\footnote{Note that any relation $R \subseteq X \times Y$ satisfying $\proves \exists y \qcolon R(x,y)$ and eq.~\eqref{eq:order-pres-relation} is automatically functional.}
\begin{equation}\label{eq:order-pres-relation}
(x \leq x') \land R(x,y) \land R(x',y') \proves y \leq y'. 
\end{equation}
The \emph{graph} of a morphism $f \colon X \to Y$ is $\setst{(x,y) \in X\times Y}{y = f(x)}$. As with arbitrary regular Lawvere doctrines, we obtain a functor from the base category $\cC$ to its category of functional relations, with the same objects as $\cC$ and morphisms the functional relations. 

\begin{proposition}\label{p:functional-rels}
	If $\cC$ is a regular category, then it is equivalent to its category of order-preserving functional relations.
\end{proposition}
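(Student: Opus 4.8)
The plan is to exhibit the graph functor as an equivalence of poset-enriched categories. Concretely, consider $\Gamma\colon \cC \to (\text{order-preserving functional relations})$ sending a morphism $f\colon X\to Y$ to its graph $\Gamma_f = \setst{(x,y)\in X\times Y}{y = f(x)}$. This functor is the identity on objects, so it is automatically essentially surjective, and its functoriality has already been recorded; the entire content is to show that $\Gamma$ is \emph{fully faithful} in the poset-enriched sense, i.e.\ that for all $X,Y$ the assignment $f\mapsto \Gamma_f$ is an isomorphism of posets from $\cC(X,Y)$ onto the poset of order-preserving functional relations $X\to Y$, the latter ordered by declaring $R \leq S$ iff $R(x,y)\land S(x,y') \proves y\leq y'$. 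One first notes that $\Gamma$ does land among order-preserving functional relations: $\Gamma_f$ is functional as in the ordinary case, and it is order-preserving because every morphism of $\cC$ is order-preserving.

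I would then dispatch the order-theoretic half, which is a direct internal-language computation. For order-preservation, assume $f\leq g$; from $\Gamma_f(x,y)\land \Gamma_g(x,y')$, that is $y=f(x)$ and $y'=g(x)$, we read off $y = f(x)\leq g(x) = y'$, so $\Gamma_f\leq \Gamma_g$. Conversely, if $\Gamma_f\leq \Gamma_g$, instantiating $y\coloneqq f(x)$ and $y'\coloneqq g(x)$ (both legitimate, since $\Gamma_f(x,f(x))$ and $\Gamma_g(x,g(x))$ hold) gives $f(x)\leq g(x)$, whence $f\leq g$. In particular $\Gamma$ is injective on hom-sets, so only fullness remains.

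Fullness is the crux. Given an order-preserving functional relation $R\subseteq X\times Y$, I consider the two restricted projections $\pi_X\colon R\to X$ and $\pi_Y\colon R\to Y$. Totality, $\proves \exists y\qcolon R(x,y)$, says exactly that $\pi_X$ is a surjection. The key observation is that order-preservation \eqref{eq:order-pres-relation} says exactly that $\pi_X$ is an \emph{embedding}: since $R\into X\times Y$ is an embedding, the order on a generalised element $(x,y)\in R$ is the restricted product order, so from $\pi_X(x,y)\leq \pi_X(x',y')$, i.e.\ $x\leq x'$, together with $R(x,y)\land R(x',y')$, condition \eqref{eq:order-pres-relation} yields $y\leq y'$ and hence $(x,y)\leq(x',y')$ in $R$. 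By Lemma~\ref{l:surj-emb-iso} a surjective embedding is an isomorphism, so $\pi_X$ is invertible. Setting $f\coloneqq \pi_X^{-1}\pi_Y\colon X\to Y$, the composite of $\pi_X^{-1}$ with the inclusion $R\into X\times Y$ equals $\langle \id_X, f\rangle$; as $\pi_X^{-1}$ is an isomorphism, this represents the same subobject as $R$, while $\langle \id_X,f\rangle$ is a split mono, hence an embedding by Example~\ref{ex:split-mono-emb}, whose associated subobject is precisely $\Gamma_f$. Therefore $R = \Gamma_f$ as subobjects of $X\times Y$, so $R$ lies in the image of $\Gamma$, establishing fullness and completing the proof.

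The main obstacle, and the place where the poset-enrichment genuinely intervenes, is this fullness step. Functionality alone only makes $\pi_X$ a surjective \emph{injection} (a monomorphism), which in a general poset-enriched regular category is far from being an isomorphism. What rescues the argument is exactly the order-preservation hypothesis, which promotes the injection to an embedding and thereby makes Lemma~\ref{l:surj-emb-iso} applicable. Put differently, restricting attention to order-preserving functional relations — rather than merely functional ones — is precisely what is needed for the graph functor to be essentially surjective on morphisms in the enriched setting.
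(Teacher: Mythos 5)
Your proof is correct and follows essentially the same route as the paper: the crux in both is that totality makes the projection $R \to X$ a surjection while condition \eqref{eq:order-pres-relation} makes it an embedding, so Lemma~\ref{l:surj-emb-iso} yields the inverse and hence the morphism whose graph is $R$. You additionally spell out the order-isomorphism on hom-posets (faithfulness and order-reflection of the graph functor), which the paper leaves implicit; this is a harmless and correct elaboration.
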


\begin{proof}
	Let $f \colon X \to Y$ be a morphism in $\cC$. The graph of $f$ is order-preserving because $x \leq x' \proves f(x) \leq f(x')$. Now, let $R \subseteq X\times Y$ be an order-preserving functional relation. The condition $\proves \exists y \qcolon R(x,y)$ means that the composite $R\into X\times Y \to X$ is a surjection, while the condition in eq.~\eqref{eq:order-pres-relation} implies that it is an embedding. Just observe that, if $(x,y), (x',y') \in R$ and $x \leq x'$, then $y \leq y'$ and so $(x,y) \leq (x',y')$. Hence, $R \to X$ is an isomorphism by Lemma~\ref{l:surj-emb-iso}, and we obtain a morphism $X \to Y$ whose graph is $R$. 
\end{proof}

We will now consider coherent categories, thereby adding finite disjunctions to the internal logic.

\begin{definition}
A regular category $\cC$ is \emph{coherent} if, for every $X\in \cC$, the meet semilattice $\Sub(X)$ has finite suprema and these are preserved under pullbacks. A~functor is \emph{coherent} if it preserves finite weighted limits, images, and finite suprema of subobjects.
\end{definition}

As in the ordinary case, any coherent category $\cC$ has a strict initial object, characterised by $\top \proves \bot$ (i.e., there is only one subobject). We call this the \emph{empty object}. The reasoning is the same as in the ordinary case: let $I$ be the least subobject of any object. We claim that any arrow $f \colon X \to I$ is an isomorphism. Firstly, since $f^{-1}$ preserves both $\top$ and $\bot$, $X$ also satisfies $\top \proves \bot$. Similar reasoning applies to $X\times X$, so $f(x) \leq f(x') \proves x \leq x'$ since $X \times X$ has only one subobject. Thus, $f$ is an embedding; but it is also surjective, thus an isomorphism, because $I$ has only one subobject. Finally, note that for any object $A$, there is a morphism $I \to A$, since the projection $A \times I \to I$ is an isomorphism. Moreover, any two morphisms $I \to A$ are equal, since their equaliser must be $\top$. 

Similarly to the ordinary case, it is not difficult to see that, since suprema of subobjects are stable under pullbacks, the doctrine of subobjects $\Sub \colon \C^\op \to \MSLat$ takes values in the category $\DL$ of (bounded) distributive lattices and lattice homomorphisms.

\subsection{Pretoposes: quotients and disjoint unions}
In analogy with the ordinary case, we introduce pretoposes as ``definable completions'' of coherent categories, adding quotients by congruences,\footnote{Not every congruence, as defined below, is a congruence in the sense of~\cite{Street1982}. Instead, our definition is equivalent to those adopted in, e.g., \cite{KV2017,AV2022,Adamek2025}.} and disjoint unions.

\begin{definition}
	Let $\cC$ be a regular category. A \emph{congruence} on an object $X$ is a relation $R \subseteq X^2$ that is transitive and satisfies $x\leq y \proves R(x,y)$. If $R$ is a congruence on $X$, a \emph{quotient} of $X$ by  $R$ is a surjection $X \epi X/R$ whose lax kernel is $R$.
\end{definition}

Every congruence is reflexive, because $\proves x\leq x$. Further, the lax kernel of any arrow is a congruence, and any surjection is a quotient of its lax kernel. 
The following is a rephrasing of \cite[Proposition~2.16]{AV2022}, stating that in a regular category, every quotient is the coinserter of its lax kernel. We offer a proof using the internal language.

\begin{lemma}\label{l:quotient-coinserter}
If it exists, a quotient $X \epi X/R$ is a coinserter of $R \rightrightarrows X$.
\end{lemma}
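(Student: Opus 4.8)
The plan is to verify directly that $q \colon X \epi X/R$ has the universal property of the coinserter of the two projections $r_0, r_1 \colon R \rightrightarrows X$. Writing $C_T \coloneqq \setst{u \colon X \to T}{r_0 u \leq r_1 u}$, I want to show that for every object $T$ the precomposition map
\[ \cC(q, T) \colon \cC(X/R, T) \to \cC(X, T), \qquad h \mapsto q h, \]
is an order-isomorphism onto $C_T$, naturally in $T$. First note that $q$ itself satisfies $r_0 q \leq r_1 q$: this is immediate from $R = \lker(q)$, since by the defining property of the lax kernel its two legs $r_0, r_1$ satisfy exactly $r_0 q \leq r_1 q$. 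Consequently the image of $\cC(q,T)$ is contained in $C_T$, because composition is order-preserving: $r_0(q h) = (r_0 q) h \leq (r_1 q) h = r_1(q h)$.

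Since $q$ is a surjection, Lemma~\ref{lem:surj-is-so} already tells us that $\cC(q,T)$ is an order-embedding. Hence it corestricts to an order-embedding $\cC(X/R,T) \to C_T$, and it remains only to prove that this corestriction is surjective: every $u \colon X \to T$ with $r_0 u \leq r_1 u$ factors (necessarily uniquely, as $q$ is epic) through $q$. This factorisation is the heart of the argument.

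To produce it I would work in the internal language and invoke Proposition~\ref{p:functional-rels}. Define the relation
\[ S \coloneqq \setst{(\bar{y}, t) \in (X/R) \times T}{\exists x \in X \qcolon q(x) = \bar{y} \,\land\, u(x) = t} \subseteq (X/R)\times T, \]
i.e.\ the image of $\langle q, u\rangle$. I claim $S$ is an order-preserving functional relation from $X/R$ to $T$. Totality, $\proves \exists t \in T \qcolon S(\bar{y}, t)$, follows from the surjectivity of $q$: every $\bar{y}$ is of the form $q(x)$, and then $t = u(x)$ witnesses the claim. For the order-preservation condition~\eqref{eq:order-pres-relation}, suppose $S(\bar{y}, t)$, $S(\bar{y}', t')$ and $\bar{y} \leq \bar{y}'$; choosing witnesses $x, x'$ we have $q(x) = \bar{y} \leq \bar{y}' = q(x')$, so $R(x,x')$ holds because $R = \lker(q)$, and therefore $u(x) \leq u(x')$ by the hypothesis $r_0 u \leq r_1 u$ on $u$; that is, $t \leq t'$. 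By the footnote to~\eqref{eq:order-pres-relation}, totality together with~\eqref{eq:order-pres-relation} already forces functionality, so Proposition~\ref{p:functional-rels} yields a morphism $h \colon X/R \to T$ whose graph is $S$; evaluating $S$ at the witness $x$ gives $h(q(x)) = u(x)$, i.e.\ $q h = u$.

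The one genuinely nontrivial step is the order-preservation of $S$: this is precisely where the laxness of the kernel is used, since turning $\bar{y} \leq \bar{y}'$ into the relation $R(x,x')$ relies on $R$ being the \emph{lax} kernel $\lker(q)$ rather than an ordinary kernel pair, and it is what distinguishes the poset-enriched statement from its ordinary counterpart. Once $h$ is in hand, combining it with the order-embedding and cocone observations above shows that $\cC(q,T)$ is an order-isomorphism onto $C_T$; naturality in $T$ is routine, and this is exactly the universal property asserting that $q$ is the coinserter of $R \rightrightarrows X$. An equivalent, point-free route to the same factorisation would take the image factorisation $X \epi S \into (X/R)\times T$ of $\langle q,u\rangle$ and check that the composite $S \into (X/R)\times T \to X/R$ is a surjective embedding, hence an isomorphism by Lemma~\ref{l:surj-emb-iso}, the embedding part being the same order computation.
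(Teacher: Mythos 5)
Your proof is correct and follows essentially the same route as the paper's: both establish the factorisation by taking the image of $\langle q, u\rangle$ as an order-preserving functional relation (using $R = \lker(q)$ to turn $q(x)\leq q(x')$ into $u(x)\leq u(x')$) and then invoking Proposition~\ref{p:functional-rels}, with Lemma~\ref{lem:surj-is-so} supplying the order-embedding part of the universal property. Your write-up is merely a little more explicit about the enriched (order-isomorphism) form of the coinserter's universal property, which the paper leaves implicit.
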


\begin{proof}
	Let $q \colon X \epi X/R$ be the quotient arrow. We know that $uq \leq vq$ since $(u,v)$ is the lax kernel of $q$. Since $q$ is a surjection, it is an epimorphism by Lemma~\ref{lem:surj-is-so}. It remains to show that any arrow $f \colon X \to Z$ such that $uf \leq vf$ factors through $q$.
	\[\begin{tikzcd}
		R \ar[r,shift left=3pt,"u"] \ar[r,shift right=3pt,"v"'] & X \ar[r,->>,"q"] \ar[rd,"f"'] & X/R \ar[d,dashed] \\
		& & Z
	\end{tikzcd}\]
We define an order-preserving functional relation $G \subseteq X/R \times Z$ by
	\[ G(y,z) \iff \exists x \in X \qcolon q(x) = y \land f(x) = z . \]
Since $q$ is surjective, we have $\proves \exists z \qcolon G(y,z)$. As $(u,v)$ is the lax kernel of~$q$, the fact that $uf \leq vf$ is equivalent to $q(x) \leq q(x') \proves f(x) \leq f(x')$. Thus, $G$ is order-preserving:
\[\begin{aligned}
		&&& y \leq y' \land [q(x)=y \land f(x) = z] \land [q(x')=y' \land f(x')=z']\\
		\proves &&& q(x) \leq q(x') \land f(x) = z \land f(x') = z'\\
		\proves &&& z \leq z' .
	\end{aligned}\]
By Proposition~\ref{p:functional-rels}, there is a morphism $g \colon X/R\to Z$ whose graph is $G$. The definition of $G$ implies that $\proves g(q(x)) = f(x)$.
\end{proof}

\begin{definition}
	A category is \emph{exact} if it is regular and every congruence has a quotient.
\end{definition}

\begin{example}\label{ex:exact-cats}
For any small category $\cP$, the presheaf category $[\cP,\Pos]$ is exact. The category~$\KOrd$ of compact ordered spaces is also exact. On the other hand, $\Set$ (regarded as a poset-enriched category with discrete hom-sets) is regular but not exact, because congruences coincide with preorders. See e.g.\ \cite[\S 3.2]{KV2017} and \cite[Proposition~5.8]{AV2022}.
\end{example}

Next, we will consider disjoint unions.

\begin{definition}\label{dfn:disjoint-union}
	In a coherent category $\cC$, the \emph{disjoint union} of objects $A$ and $B$ is an object $A+B$, equipped with two embeddings $A \into A+B$ and $B \into A+B$, such that $A$ and $B$ cover $A+B$ and are incomparable. In the internal language, where for instance ``$x \in A$'' refers to the predicate $A \into A+B$, this can be expressed as follows:
	\begin{itemize}
		\item $\proves (x \in A) \lor (x \in B)$
		\item $(a \in A) \land (b \in B) \land (a \leq b) \proves \bot$
		\item $(a \in A) \land (b \in B) \land (b \leq a) \proves \bot$
	\end{itemize}
\end{definition}

\begin{lemma}
If it exists, the disjoint union of $A$ and $B$ is their coproduct.
\end{lemma}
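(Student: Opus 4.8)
The plan is to verify the coproduct universal property in the poset-enriched sense: writing $\iota_A\colon A\into A+B$ and $\iota_B\colon B\into A+B$ for the two given embeddings, I want to show that for every object $Z$ the precomposition map
\[
\cC(A+B,Z)\longrightarrow\cC(A,Z)\times\cC(B,Z),\qquad h\mapsto(\iota_A h,\iota_B h),
\]
is an order-isomorphism; naturality in $Z$ is then automatic. To produce a factorisation of a pair $f\colon A\to Z$, $g\colon B\to Z$, I would define the candidate graph
\[
G\coloneqq\setst{(x,z)\in(A+B)\times Z}{((x\in A)\land f(x)=z)\lor((x\in B)\land g(x)=z)}
\]
(here $f(x)$ makes sense for $x\in A$ since $\iota_A$ is a monic embedding identifying the subobject $A$ with its elements), and show it is an order-preserving functional relation, so that Proposition~\ref{p:functional-rels} yields a unique $h\colon A+B\to Z$ with graph $G$. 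Totality $\proves\exists z\qcolon G(x,z)$ is immediate from the covering condition $\proves(x\in A)\lor(x\in B)$ of Definition~\ref{dfn:disjoint-union}.

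The substance of the argument, and the step I expect to be the \emph{main obstacle}, is order-preservation, i.e.\ $(x\leq x')\land G(x,z)\land G(x',z')\proves z\leq z'$. Expanding the two disjunctions gives four cases. In the two diagonal cases $x,x'\in A$ or $x,x'\in B$, the inequality $x\leq x'$ reflects along the embedding $\iota_A$ (resp.\ $\iota_B$) to an inequality in $A$ (resp.\ $B$), whence $z\leq z'$ by order-preservation of $f$ (resp.\ $g$). The two mixed cases are exactly where disjointness is used: if $x\in A$, $x'\in B$ and $x\leq x'$, the second bullet of Definition~\ref{dfn:disjoint-union} gives $\bot$, and symmetrically the third bullet handles $x\in B$, $x'\in A$, so both mixed cases are vacuous. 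Since $G$ is total and order-preserving it is functional, and Proposition~\ref{p:functional-rels} delivers $h$; the identities $\iota_A h=f$ and $\iota_B h=g$ then follow from the definition of $G$ and functionality, using that the two disjointness conditions force $A\cap B=\bot$ (take $a=b=x$, using $\proves x\leq x$), so membership in $A$ excludes the second disjunct.

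It remains to see that the map is an order-isomorphism, and I would treat injectivity and order-reflection uniformly via the covering condition. Order-preservation of $h\mapsto(\iota_A h,\iota_B h)$ is clear since composition is order-preserving. For the converse, given $h,h'\colon A+B\to Z$, consider the subobjects $[h=h']\coloneqq\setst{x}{h(x)=h'(x)}$ and $[h\leq h']\coloneqq\setst{x}{h(x)\leq h'(x)}$ of $A+B$ (legitimate subobjects since $\Delta_Z$ and $[\leq_Z]$ are embeddings). If $\iota_A h=\iota_A h'$ and $\iota_B h=\iota_B h'$, then both $\iota_A$ and $\iota_B$ factor through $[h=h']$, so $A\lor B\subseteq[h=h']$; the covering condition gives $A\lor B=A+B$, hence $\proves h(x)=h'(x)$ and $h=h'$. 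Replacing equality by $\leq$ throughout, the hypotheses $\iota_A h\leq\iota_A h'$ and $\iota_B h\leq\iota_B h'$ force $\iota_A,\iota_B$ to factor through $[h\leq h']$, and the same covering argument yields $h\leq h'$, which is precisely order-reflection. As the resulting bijection $\cC(A+B,Z)\cong\cC(A,Z)\times\cC(B,Z)$ is natural in $Z$, the object $A+B$ is the coproduct of $A$ and $B$.
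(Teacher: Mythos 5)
Your proposal is correct and takes essentially the same route as the paper's proof: the order-embedding property of $h\mapsto(\iota_A h,\iota_B h)$ is deduced from the covering condition (the paper invokes the argument of Lemma~\ref{lem:surj-is-so} for this), and surjectivity is obtained by constructing the same order-preserving functional relation and applying Proposition~\ref{p:functional-rels}, with the identical four-case analysis in which the disjointness axioms rule out the mixed cases and the embedding property of $\iota_A$, $\iota_B$ handles the diagonal ones. No substantive differences.
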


\begin{proof}
Consider the canonical embeddings $\iota_A \colon A \hookrightarrow A+B \hookleftarrow B \cocolon \iota_B$. We claim that the latter is a coproduct diagram, i.e., for every $X\in \cC$ the induced map
\[
\cC(A+B,X) \to \cC(A,X) \times \cC(B,X)
\] 
is an order-isomorphism. Since $A$ and $B$ jointly cover $A+B$, the same reasoning that was used to prove the first part of Lemma~\ref{lem:surj-is-so} shows that the above map is an order-embedding. It remains to show that it is also surjective. Consider arbitrary arrows $f \colon A \to X$ and $g \colon B \to X$. We define a relation $R \subseteq (A+B)\times X$ by
	\[ R(s,x) \iff (\exists a \in A \qcolon s=\iota_A(a) \land f(a)=x) \lor (\exists b \in B \qcolon s=\iota_B(b) \land f(b)=x) . \]
We have $\proves \exists x \qcolon R(s,x)$ because, for every $s \in A+B$, either $s \in \im(\iota_A)$ or $s \in \im(\iota_B)$. To show that $R(s,x)$ is order-preserving, suppose that $s \leq s'$, $R(s,x)$ and $R(s',x')$. We must prove that $x \leq x'$. Depending on whether each of $s$ and $s'$ is in $\im(\iota_A)$ or $\im(\iota_B)$, there are four cases to consider. If $s = \iota_A(a)$ and $s' = \iota_A(a')$, then $x = f(a) \leq f(a') = x'$ since $\iota_A$ is an embedding. If $s = \iota_A(a)$ and $s' = \iota_B(b')$, then $s \leq s'$ is impossible. The other two cases are symmetric. By Proposition~\ref{p:functional-rels}, $R$ is the graph of an arrow $[f,g] \colon A+B\to X$. We have $f(a) = x \proves [f,g](\iota_A(a)) = x$, and similarly for $g$ and $\iota_B$. It follows that $\iota_A [f,g] = f$ and $\iota_B [f,g] = g$, showing that the above map is surjective.
\end{proof}

\begin{definition}
	A category is a \emph{pretopos} if it is coherent, exact, and has disjoint~unions.
\end{definition}

\begin{example}
All of the exact categories given in Example~\ref{ex:exact-cats}, namely the presheaf categories $[\cP,\Pos]$ and $\KOrd$, are also pretoposes.
\end{example}

\begin{remark}
In ordinary pretoposes, pushouts cannot be characterised using the internal language because they require an infinite iteration. By contrast, quotients and disjoint unions in poset-enriched pretoposes can be combined to form lax pushouts.

In the internal logic, the lax pushout $(A+B)/R$ of arrows $f \colon R\to A$ and $g \colon R\to B$ is characterised by two embeddings $\iota_A \colon A \into (A+B)/R$ and $\iota_B \colon B \into (A+B)/R$ subject to the conditions: (i) $\proves (x \in A) \lor (x \in B)$, (ii) ${\iota_A(a) \leq \iota_B(b) \proves \exists r \in R \qcolon a=f(r) \land b=g(r)}$, and conversely (iii) $\exists r \in R \qcolon a=f(r) \land b=g(r) \proves \iota_A(a) \leq \iota_B(b)$.
\end{remark}

\subsection{Constants}\label{s:constants}
Let $\one$ denote the terminal object of a regular category $\cC$. Given $X \in \cC$, a \emph{constant}, or \emph{point}, of $X$ is an arrow $c \colon \one \to X$. The internal logic of $\cC$ can be enriched with these constants in the obvious way, thanks to the canonical identification $\one \times A = A$. Any constant $c \colon \one \to X$ is an embedding because $\cC(T,\one)$ is a singleton for all~$T$, and so $\cC(T,\one) \to \cC(T,X)$ is an order-embedding. We write $\set{c}$ for the image of $c \colon \one \to X$. 

\begin{example}\label{exmp:two-tensor}
The lax pushout of the identity $\one \to \one$ along itself is characterised by two constants $0$ and $1$ subject to the conditions $\proves (x=0) \lor (x=1)$, $\proves 0 \leq 1$ and $1 \leq 0 \proves \bot$.
This object is also the tensor $\two \cdot \one$, where $\two$ is the two-element chain, characterised by the universal property $\cC(\two\cdot\one,X) \cong \Pos(\two,\cC(\one,X))$ for every $X\in\cC$. Tensors with finite posets can be built in a similar way.
\end{example}

\section{Projective covers and generators}\label{s:projectives-and-generators}

With the aim of characterising the category of compact ordered spaces in Section~\ref{s:char-KOrd}, in the present section, we study projective objects and generators in regular categories.
In particular, we show that any exact category with enough projectives can be reconstructed, up to equivalence, from its full subcategory of projective objects---and, in fact, from any of its projective covers (Theorem~\ref{t:presh-emb}). This can be regarded as a weak version of Barr's embedding theorem for ordinary regular categories.

\subsection{A weak version of Barr's embedding theorem}

\begin{definition}
	Let $\A$ be a regular category. An object $X \in \A$ is \emph{projective} if $\A(X,-) \colon \A \to \Pos$ preserves surjections. A full subcategory $\Ps \subseteq \A$ is a \emph{projective cover} of $\A$ if every object in $\Ps$ is projective and each object of $\A$ is \emph{covered} by an object in $\Ps$, i.e.\ for every $X\in \A$ there exist $Y\in \Ps$ and a surjection $Y\epi X$.
\end{definition}

Vitale showed in \cite[Proposition~2.3]{Vitale1994} that two ordinary exact categories with enough projectives are equivalent if, and only if, they admit equivalent projective covers. Corollary~\ref{c:proj-cover-equi} establishes an analogous result for poset-enriched exact categories. This relies on Theorem~\ref{t:presh-emb}, which is a variant, for regular categories with enough projectives, of Barr's embedding theorem for ordinary regular categories~\cite{Barr1971}; see Remark~\ref{rem:Barr-embedding-thm}.

\begin{theorem}\label{t:presh-emb}
	Let $\cA$ be a regular category, and let $\Ps \subseteq \cA$ be a projective cover. Then the nerve functor 
\[N_\Ps \colon \cA \to [\Ps^\op,\Pos]\]
is regular and fully faithful. Moreover, if $\cA$ is exact, the essential image of $N_\Ps$ consists of those presheaves that are obtained as the quotient of a representable presheaf by a congruence that is covered by another representable presheaf.
\end{theorem}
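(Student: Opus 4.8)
The plan is to take $N_\Ps$ to be the restricted Yoneda embedding, sending $X\in\cA$ to the presheaf $P\mapsto\cA(P,X)$ on $\Ps$, and to treat the four assertions (regular, faithful, full, essential image) in turn. Regularity is almost immediate. Weighted limits in $[\Ps^\op,\Pos]$ are computed pointwise, and each $\cA(P,-)$ preserves weighted limits since it is representable; comparing universal properties pointwise shows $N_\Ps$ preserves all finite weighted limits. For surjections, recall that a morphism in $[\Ps^\op,\Pos]$ is a surjection exactly when it is componentwise surjective. If $f\colon X\epi Y$ is a surjection in $\cA$, then for each $P\in\Ps$ the map $\cA(P,f)$ is surjective because $P$ is projective; hence $N_\Ps f$ is a surjection.

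For full faithfulness, I would first show the hom-map $\cA(X,Y)\to[\Ps^\op,\Pos](N_\Ps X,N_\Ps Y)$ is an order-embedding: choose a surjection $p\colon P\epi X$ with $P\in\Ps$; if $N_\Ps f\leq N_\Ps g$, then evaluating at $p$ gives $pf\leq pg$, and since $\cA(p,-)$ is an order-embedding (Lemma~\ref{lem:surj-is-so}) this forces $f\leq g$. For fullness, let $\alpha\colon N_\Ps X\to N_\Ps Y$ be natural. I present $X$ by projectives: take $p\colon P\epi X$ with lax kernel $\pi_1,\pi_2\colon R\rightrightarrows P$, and a surjection $q\colon Q\epi R$ with $Q\in\Ps$; write $u\coloneqq q\pi_1$ and $v\coloneqq q\pi_2$. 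Set $f'\coloneqq\alpha_P(p)\colon P\to Y$. Naturality gives $uf'=\alpha_Q(up)$ and $vf'=\alpha_Q(vp)$; since $up\leq vp$ and $\alpha_Q$ is order-preserving, $uf'\leq vf'$, whence $\pi_1 f'\leq\pi_2 f'$ (again by Lemma~\ref{lem:surj-is-so}, as $q$ is a surjection). By Lemma~\ref{l:quotient-coinserter}, $p$ is the coinserter of $R\rightrightarrows P$, so $f'$ factors uniquely as $f'=pf$ for some $f\colon X\to Y$. Finally $N_\Ps f=\alpha$: for $h\colon P'\to X$ with $P'\in\Ps$, projectivity lifts $h$ to $\tilde h\colon P'\to P$ with $\tilde h p=h$, and naturality yields $hf=\tilde h f'=\alpha_{P'}(h)$.

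For the essential image under the hypothesis that $\cA$ is exact, note first that the representable presheaves are exactly the $N_\Ps P$ with $P\in\Ps$, and that $[\Ps^\op,\Pos]$ is exact (Example~\ref{ex:exact-cats}). Given $X\in\cA$, the presentation above makes $N_\Ps p\colon N_\Ps P\epi N_\Ps X$ a surjection whose lax kernel is $N_\Ps R$ (lax kernels being finite weighted limits, preserved by $N_\Ps$); since a surjection is the quotient of its lax kernel, $N_\Ps X$ is the quotient of $N_\Ps P$ by the congruence $N_\Ps R$, and $N_\Ps R$ is covered by the representable $N_\Ps Q$. This proves one inclusion. Conversely, suppose $F$ is the quotient of a representable $N_\Ps P$ by a congruence $S$ covered by a representable $N_\Ps Q$. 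The cover $N_\Ps Q\epi S\into N_\Ps(P\times P)$ corresponds, by full faithfulness, to a pair $(u,v)\colon Q\to P\times P$; let $R\into P\times P$ be its image in $\cA$. Since $N_\Ps$ preserves images, $N_\Ps R=S$; and since $N_\Ps$ is fully faithful and regular, the congruence conditions defining $S$ (reflexivity and transitivity, expressed through finite weighted limits and images) reflect back to show $R$ is a congruence on $P$. Exactness of $\cA$ provides the quotient $X\coloneqq P/R$, and the first inclusion shows $N_\Ps X$ is the quotient of $N_\Ps P$ by $N_\Ps R=S$; by uniqueness of quotients, $N_\Ps X\cong F$.

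I expect the main obstacle to be the fullness argument, where the poset-enrichment is essential: one must produce the comparison map $f$ from a two-step presentation $Q\rightrightarrows P\epi X$ and verify, via the coinserter property of the quotient (Lemma~\ref{l:quotient-coinserter}) together with the order-orthogonality of surjections and embeddings (Lemma~\ref{lem:surj-is-so}), both that the relevant inequalities hold and that the resulting $f$ indeed induces $\alpha$. The reflection of the congruence conditions in the essential image is the second delicate point, relying on $N_\Ps$ being simultaneously fully faithful and regular, so that subobject inequalities, images, and lax kernels are transported faithfully in both directions.
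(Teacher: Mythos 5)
Your proposal is correct and follows essentially the same route as the paper's proof: pointwise preservation of weighted limits and surjections for regularity, cancellation along a projective cover for faithfulness, a two-step presentation $Q\rightrightarrows P\epi X$ combined with Lemma~\ref{lem:surj-is-so} and the coinserter property of quotients for fullness, and transport of congruences in both directions (using that $N_\Ps$ is regular, fully faithful, hence conservative) for the essential image. The only difference is presentational: you make explicit the appeal to Lemma~\ref{l:quotient-coinserter} where the paper leaves it implicit.
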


\begin{proof}
	The nerve functor $N_\Ps \colon \cA \to [\Ps^\op,\Pos]$ preserves finite weighted limits for any full subcategory $\Ps \subseteq \cA$, and it preserves surjections if and only if each object of $\Ps$ is projective (just recall that the surjections in $[\Ps^\op,\Pos]$ are the component-wise surjections). Hence, $N_{\Ps}$ is a regular functor.
Now, let $f, g \colon X\rightrightarrows Y$ in $\cA$ satisfy $N_\Ps(f) \leq N_\Ps(g)$, and let $q \colon P \epi X$ be a surjection with $P \in \Ps$. Since $qf \leq qg$ and $q$ is a surjection, we get $f\leq g$ by Lemma~\ref{lem:surj-is-so}. This shows that $N_\Ps$ is faithful.
	
To prove that $N_\Ps$ is full, let $F \colon N_\Ps(X) \to N_\Ps(Y)$ be a natural transformation. In the spirit of the Yoneda lemma, consider $F(q) \in \cA(P,Y)$. We will show that there exists a morphism $f \colon X \to Y$ making the following diagram commute.
	\[\begin{tikzcd}
		P \ar[d,->>,"q"'] \ar[rd,"F(q)"] & \\
		X \ar[r,dashed,"f"'] & Y
	\end{tikzcd}\]
	Since $\Ps$ is a projective cover, there exists $Q \in \Ps$ that covers the lax kernel $\lker(q)$. 
		Let $a, b \colon Q \rightrightarrows P$ be the two induced morphisms, which satisfy $aq \leq bq$. By naturality of $F$, we get $aF(q) = F(aq) \leq F(bq) = bF(q)$. This shows that $\lker(q) \subseteq \lker(F(q))$. Hence, there is $f \colon X \to Y$ as desired. We claim that $F = N_\Ps(f)$. Let $g \colon P' \to X$ with $P' \in \Ps$. By projectivity, there is $g' \colon P' \to P$ such that $g'q = g$. Thus, $F(g) = F(g'q) = g'F(q) = g'qf = gf$.
	This concludes the proof that $N_\Ps$ is regular and fully faithful.
	
Now, assume $\cA$ is exact. As we have just seen, each object $X \in \cA$ has a ``presentation'' 
\[\begin{tikzcd}
Q \arrow[yshift=4pt]{r}{a} \arrow[yshift=-4pt]{r}[swap]{b} & P \arrow[twoheadrightarrow]{r}{q} & X
\end{tikzcd}\] 
with $P, Q \in \Ps$. The image of $Q \rightrightarrows P$ (more precisely, of the induced arrow ${Q\to P^{2}}$) is the congruence $\lker(q)$, therefore~$X$ is the quotient of $P$ by this image. Since regular functors preserve congruences, $N_\Ps(X)$ is the quotient of the image of $N_\Ps(Q) \rightrightarrows N_\Ps(P)$. 
Conversely, given arrows $N_\Ps(Q) \rightrightarrows N_\Ps(P)$ whose image is a congruence, the image of $Q \rightrightarrows P$ is also a congruence, because $N_\Ps$ is fully faithful and thus conservative. Since $\cA$ is exact, the quotient of $Q \rightrightarrows P$ exists and is sent to the quotient of $N_\Ps(Q) \rightrightarrows N_\Ps(P)$, which is therefore in the image of $N_\Ps$. This proves the second part of the theorem.
\end{proof}

\begin{remark}
The proof that $N_\Ps \colon \cA \to [\Ps^\op,\Pos]$ is a fully faithful regular functor can be phrased using the $(1,2)$-categorical notion of a Grothendieck topology. For further information on $2$-categorical sheaf theory, we refer the interested reader to~\cite{Street1982}.

Let $J_{\mathrm{reg}}$ be the topology on $\cA$ generated by the surjections. We have the following chain of embeddings and equivalences:
	\[ \cA \subseteq \Sh(\cA,J_{\mathrm{reg}}) \simeq \Sh(\Ps,J_{\mathrm{reg}}) = [\Ps^\op,\Pos] . \]
	The first embedding $\cA \subseteq \Sh(\cA,J_{\mathrm{reg}})$ is fully faithful and regular. Since every object of $\cA$ is covered by an object of $\Ps$, we have $\Sh(\cA,J_{\mathrm{reg}}) \simeq \Sh(\Ps,J_{\mathrm{reg}})$, where the second topology is the restriction of the first one. Finally, in $(\Ps,J_{\mathrm{reg}})$ every cover is a split epimorphism, hence the topology is trivial and $\Sh(\Ps,J_{\mathrm{reg}}) = [\Ps^\op,\Pos]$.
\end{remark}

\begin{remark}\label{rem:Barr-embedding-thm}
	Theorem~\ref{t:presh-emb} can be seen as a variant of Barr's embedding theorem that applies to (poset-enriched) regular categories with enough projectives. Barr's embedding theorem states that if $\cA$ is an ordinary regular category, then the regular functor $\cA \to [\cMod(\cA)^\op,\Set]$ is fully faithful, where $\cMod(\cA)$ is the category of \emph{models} of $\cA$, i.e., the regular functors $\cA \to \Set$. An object $P$ of $\cA$ is projective exactly when $\cA(P,-) \colon \cA \to \Set$ is a regular functor. Hence, the projective objects can be considered a particularly simple type of model, sitting at the intersection of syntax and semantics. When there are enough projectives, we can replace the category of models $\cMod(\cA)$ in Barr's embedding theorem with its full subcategory determined by the projective objects.
\end{remark}

Theorem~\ref{t:presh-emb} has the following immediate consequence.

\begin{corollary}\label{c:proj-cover-equi}
Let $\cA, \cB$ be exact categories, and let $\Ps_{\cA} \subseteq \cA$ and $\Ps_{\cB} \subseteq \cB$ be projective covers. If $\Ps_{\cA} \simeq \Ps_{\cB}$ then $\cA\simeq \cB$.
\end{corollary}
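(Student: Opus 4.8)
The plan is to read the statement off Theorem~\ref{t:presh-emb} by transporting the description of the essential image of the nerve functor along the given equivalence of projective covers. Let $E \colon \Ps_{\cA} \to \Ps_{\cB}$ be an equivalence. By Theorem~\ref{t:presh-emb}, the nerve functors $N_{\Ps_{\cA}} \colon \cA \to [\Ps_{\cA}^{\op},\Pos]$ and $N_{\Ps_{\cB}} \colon \cB \to [\Ps_{\cB}^{\op},\Pos]$ are fully faithful, so $\cA$ is equivalent to the essential image $\mathcal{E}_{\cA}$ of $N_{\Ps_{\cA}}$, and $\cB$ is equivalent to the essential image $\mathcal{E}_{\cB}$ of $N_{\Ps_{\cB}}$. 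It therefore suffices to produce an equivalence $\mathcal{E}_{\cA} \simeq \mathcal{E}_{\cB}$.

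First I would observe that restriction along $E^{\op}$ yields an equivalence of presheaf categories $E^{*} \colon [\Ps_{\cB}^{\op},\Pos] \to [\Ps_{\cA}^{\op},\Pos]$, with quasi-inverse given by restriction along a quasi-inverse of $E$. Being an equivalence, $E^{*}$ is in particular a regular functor that preserves and reflects all the structure appearing in Theorem~\ref{t:presh-emb}: it preserves finite weighted limits, surjections, congruences, and quotients. Moreover, full faithfulness of $E$ gives, for each $P \in \Ps_{\cA}$, a natural isomorphism between $E^{*}\bigl(\Ps_{\cB}(-,E(P))\bigr)$ and $\Ps_{\cA}(-,P)$, so that $E^{*}$ carries representable presheaves to representable presheaves, and the same holds for its quasi-inverse.

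The key step is then to check that $E^{*}$ matches up the two essential images. By Theorem~\ref{t:presh-emb} applied to the exact category $\cB$, a presheaf lies in $\mathcal{E}_{\cB}$ precisely when it is the quotient of a representable by a congruence that is covered by another representable. Since $E^{*}$ sends representables to representables and preserves quotients, congruences, and surjections, it maps every such presheaf to one of the same form, hence into $\mathcal{E}_{\cA}$. Running the identical argument with the quasi-inverse of $E^{*}$ gives the reverse inclusion, so $E^{*}$ restricts to an equivalence $\mathcal{E}_{\cB} \simeq \mathcal{E}_{\cA}$. Chaining the equivalences $\cA \simeq \mathcal{E}_{\cA} \simeq \mathcal{E}_{\cB} \simeq \cB$ completes the argument.

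I do not anticipate a serious obstacle, as the statement is billed as an immediate corollary. The only point requiring minor care is the bookkeeping around the direction of $E^{*}$ together with the verification that an equivalence of presheaf categories preserves the specific recipe (quotient of a representable by a representably covered congruence) that defines the essential images; both reduce to the fact that equivalences preserve all enriched regular-category structure and send representables to representables.
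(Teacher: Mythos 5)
Your proposal is correct and matches the paper's (implicit) argument: the paper derives this corollary directly from Theorem~\ref{t:presh-emb} as an ``immediate consequence'', the point being exactly what you spell out --- the nerve functor is fully faithful and its essential image is described purely in terms of the projective cover, so an equivalence $\Ps_{\cA} \simeq \Ps_{\cB}$ transports one essential image onto the other via restriction of presheaves. Your elaboration (that $E^{*}$ is an equivalence of presheaf categories sending representables to representables and preserving surjections, congruences, and quotients) is the right bookkeeping and introduces no gaps.
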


In $\Pos$, every object is covered by a set, and every set is projective. Thus, $\Set\into \Pos$ is a projective cover (in fact, every projective object in $\Pos$ is order-discrete). The following is a generalisation of the observation that sets are projective in~$\Pos$.

\begin{lemma}\label{projective-tensors}
Let $X$ be a projective object in a regular category $\cA$. For any set $S$, if the copower $S\cdot X$ exists, then it is projective.
\end{lemma}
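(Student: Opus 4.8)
The plan is to use the universal property of the copower to identify the represented presheaf $\cA(S\cdot X,-)$ with an $S$-fold power of $\cA(X,-)$, and then to observe that products of surjections in $\Pos$ are again surjections.

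First I would record the consequence of the defining universal property of the copower. Since $S$ is a discrete poset, an order-preserving map $S\to \cA(X,Y)$ is the same thing as an $S$-indexed family of elements of $\cA(X,Y)$, so $\Pos(S,\cA(X,Y))$ is the $S$-fold power of $\cA(X,Y)$ computed in $\Pos$, equipped with the pointwise order. Thus the universal property $\cA(S\cdot X,Y)\cong \Pos(S,\cA(X,Y))$ yields an order-isomorphism
\[ \cA(S\cdot X,Y) \cong \prod_{s\in S}\cA(X,Y), \]
natural in $Y$. Concretely, writing $\kappa_s \colon X \to S\cdot X$ for the coprojections (obtained by applying the isomorphism for $Y = S\cdot X$ to the identity), this sends an arrow $h \colon S\cdot X \to Y$ to the family $(\kappa_s h)_{s\in S}$.

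Next I would trace through naturality. For a morphism $f \colon A \to B$ in $\cA$, the two isomorphisms above (for $Y=A$ and $Y=B$) intertwine $\cA(S\cdot X,f)$ with the product map $\prod_{s\in S}\cA(X,f)$. Now suppose $f$ is a surjection. Since $X$ is projective, the functor $\cA(X,-)$ preserves surjections, so each factor $\cA(X,f)\colon \cA(X,A)\to\cA(X,B)$ is a surjection in $\Pos$, that is, a surjective order-preserving map.

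Finally, it remains to observe that a product of surjections in $\Pos$ is a surjection. The underlying function of $\prod_{s\in S}\cA(X,f)$ is the product of the underlying functions, which is surjective because each factor is (choosing a preimage in each coordinate), and a product of order-preserving maps is order-preserving; hence the product map is a surjection in $\Pos$. Consequently $\cA(S\cdot X,f)$ is a surjection, so $\cA(S\cdot X,-)$ preserves surjections and $S\cdot X$ is projective. The only point requiring any care is the identification of $\cA(S\cdot X,-)$ with the power $\cA(X,-)^{S}$ as an order-enriched isomorphism natural in the second variable, together with its compatibility with postcomposition by $f$; once this is in place, the argument is routine.
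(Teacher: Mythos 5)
Your proof is correct and follows essentially the same route as the paper: the paper factors $\cA(S\cdot X,-)$ as $\Pos(S,\cA(X,-))$ and notes that both $\cA(X,-)$ and $\Pos(S,-)$ preserve surjections because $X$ is projective in $\cA$ and $S$ (as a discrete poset) is projective in $\Pos$. Your identification of $\Pos(S,\cA(X,-))$ with the $S$-fold power, and your hands-on check that a product of surjections in $\Pos$ is a surjection, is just an explicit unpacking of the statement that $S$ is projective in $\Pos$.
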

\begin{proof}
The functors $\cA(X,-)$ and $\Pos(S,-)$ preserve surjections because $X$ and $S$ are projective in $\cA$ and $\Pos$, respectively. Therefore, their composite $\Pos(S,\cA(X,-))\cong \cA(S\cdot X, - )$ preserves surjections. That is, $S\cdot X$ is projective.
\end{proof}

\subsection{Generators and discrete generators}
We start by recalling the customary definition of a generator, cf.\ e.g.~\cite{KV2017}.

\begin{definition}\label{d:generator}
An object $G$ of a regular category $\cA$ is a \emph{generator} if it satisfies the following properties:
\begin{enumerate}[label=(\roman*)]
\item for every poset $P$, the tensor $P\cdot G$ exists in $\A$;
\item\label{i:gen-covers} for every $X\in\cA$, the canonical arrow $\A(G,X)\cdot G \to X$ is a surjection.
\end{enumerate}
\end{definition}

Given a \emph{projective generator} $G$ in $\cA$, i.e., a generator that is also a projective object, we want to construct a projective cover of $\cA$. To this end, consider the full subcategory of $\cA$ spanned by the objects of the form $P\cdot G$, with $P$ a poset. Since $G$ is a generator, every object of $\cA$ is covered by an object of this form. But in general, objects of the form $P\cdot G$ need not be projective, unless $P$ is order-discrete (Lemma~\ref{projective-tensors}). This naturally leads us to consider copowers of $G$, and the following variant of the notion of generator.

\begin{definition}\label{d:discrete-generator}
An object $G$ of a regular category $\cA$ is a \emph{discrete generator} if it satisfies the following properties:
\begin{enumerate}[label=(\roman*)]
\item for every set $S$, the copower $S\cdot G$ exists in $\A$;
\item\label{i:discrete-gen-covers} for every $X\in\cA$, the canonical arrow $\abs{\A(G,X)}\cdot G \to X$ is a surjection.
\end{enumerate}
\end{definition}

\begin{remark}
Item~\ref{i:gen-covers} in Definition~\ref{d:generator} is equivalent to saying that for every $X\in \cA$ there exist a poset $P$ and a surjection $P\cdot G\epi X$. Similarly, item~\ref{i:discrete-gen-covers} in Definition~\ref{d:discrete-generator} is equivalent to saying that for every $X\in \cA$ there exist a set $S$ and a surjection $S\cdot G\epi X$.
\end{remark}

The discussion above yields the following fact.

\begin{lemma}\label{l:projective-discrete-gen-cover}
Let $\cA$ be a regular category and $G$ a projective discrete generator in~$\cA$. The full subcategory of $\cA$ spanned by the copowers $S\cdot G$, with $S$ a set, is a projective~cover. 
\end{lemma}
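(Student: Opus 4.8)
The plan is to verify the two defining conditions of a projective cover directly, since the substantive work has already been carried out in the preceding results. Recall that a full subcategory $\Ps \subseteq \cA$ is a projective cover when every object of $\Ps$ is projective, and every object of $\cA$ is covered by an object of $\Ps$. Here $\Ps$ is the full subcategory spanned by the copowers $S \cdot G$ with $S$ a set, so I would treat these two clauses in turn.

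For projectivity, I would first observe that all these copowers exist by item~(i) of Definition~\ref{d:discrete-generator}, since $G$ is a discrete generator. As $G$ is assumed projective, Lemma~\ref{projective-tensors} then applies to each set $S$ and shows that $S \cdot G$ is projective. Hence every object of $\Ps$ is projective.

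For the covering condition, I would invoke item~\ref{i:discrete-gen-covers} of Definition~\ref{d:discrete-generator}: for every $X \in \cA$, the canonical arrow $\abs{\cA(G,X)} \cdot G \to X$ is a surjection. Taking $S = \abs{\cA(G,X)}$ exhibits $X$ as covered by the object $S \cdot G \in \Ps$. Equivalently, one may simply cite the reformulation recorded in the Remark following Definition~\ref{d:discrete-generator}, which states that item~\ref{i:discrete-gen-covers} is the same as the existence, for each $X$, of a set $S$ and a surjection $S\cdot G \epi X$.

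There is no genuine obstacle here: the statement is an immediate assembly of Lemma~\ref{projective-tensors} with the two clauses of the definition of a discrete generator. The only conceptual point worth stressing is \emph{why} one passes to copowers rather than arbitrary tensors: Lemma~\ref{projective-tensors} guarantees projectivity precisely for copowers $S\cdot G$ indexed by (discrete) sets, and would fail for a general tensor $P\cdot G$ when $P$ is not order-discrete. This is exactly the distinction between a discrete generator and an ordinary generator that motivates the definition, and it is what makes the subcategory of copowers—rather than of all tensors—the right candidate for a projective cover.
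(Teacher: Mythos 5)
Your proposal is correct and matches the paper's own (implicit) argument exactly: the paper states the lemma as an immediate consequence of the preceding discussion, namely that each copower $S\cdot G$ exists and is projective by Lemma~\ref{projective-tensors}, and that item~\ref{i:discrete-gen-covers} of Definition~\ref{d:discrete-generator} provides the covering. Nothing is missing.
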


The next proposition shows that, for regular categories, the concept of a discrete generator is a weakening of that of a generator. However, the two notions coincide for exact categories.

\begin{proposition}\label{p:gen-iff-dicrete-gen-and-coinserters}
The following statements hold in any regular category $\cA$:
\begin{enumerate}[label=(\alph*)]
\item\label{i:gen-implies-dis-gen} Every generator is a discrete generator.
\item\label{i:dis-gen-implies-gen} If $\cA$ is exact, then every discrete generator is a generator.
\end{enumerate}
Moreover, if $\cA$ is exact and admits a generator, then it is well-powered and cocomplete.
\end{proposition}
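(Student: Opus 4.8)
The plan is to prove the three assertions in turn, reserving most of the effort for cocompleteness. Part (a) is the easiest. Since a set is a discrete poset and a copower is the corresponding tensor, condition (i) of Definition~\ref{d:discrete-generator} is a special case of condition (i) of Definition~\ref{d:generator}. For the covering condition, write $S$ for the image of the canonical map $h \colon \abs{\cA(G,X)}\cdot G \to X$; I want to show $S = X$. Each point $g \colon G \to X$ satisfies $\iota_g h = g$, where $\iota_g$ is the coprojection into the $g$-th summand of the copower, so $g$ factors through $S \into X$. Since $S \into X$ is an embedding, $\cA(G,S) \into \cA(G,X)$ is an order-embedding, and hence the assignment $g \mapsto \tilde{g}$ (the factorisation of $g$ through $S$) is an order-preserving map $\cA(G,X) \to \cA(G,S)$. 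By the universal property of the tensor $\cA(G,X)\cdot G$, this exhibits a factorisation of the generator surjection $e \colon \cA(G,X)\cdot G \to X$ through $S \into X$. As $e$ is a surjection, $X = \im(e) \subseteq S$, whence $S = X$.

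For part (b) the only missing ingredient is the tensor $P\cdot G$ for an arbitrary poset $P$, since the generator covering condition then follows from the discrete one: the discrete canonical map factors as $\abs{\cA(G,X)}\cdot G \to \cA(G,X)\cdot G \xrightarrow{e} X$, and as the image of a composite is contained in the image of its second factor, $e$ is a surjection. To build $P\cdot G$, I would present $P$ in $\Pos$ as the coinserter of the two projections ${\leq_P} \rightrightarrows \abs{P}$ (with ${\leq_P}$ and $\abs{P}$ discrete; note this pair is reflexive via the diagonal $\abs{P} \to {\leq_P}$), and \emph{define} $P\cdot G$ as the coinserter of the induced reflexive pair of copowers ${\leq_P}\cdot G \rightrightarrows \abs{P}\cdot G$. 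By Lemma~\ref{l:quotient-coinserter} this coinserter is a quotient of $\abs{P}\cdot G$ by the congruence generated by the pair, and exactness supplies the quotient once that congruence is exhibited. The key point is that, because $\leq_P$ is already transitive, the congruence need not be built by an infinite closure but can be written down directly, as the image in $(\abs{P}\cdot G)^2$ of a copower built from ${\leq_P}$ and the order $[\leq_G]$ of $G$. Verifying that this is genuinely a congruence and that the resulting quotient satisfies $\cA(P\cdot G, Y) \cong \Pos(P, \cA(G,Y))$ naturally in $Y$ is then a direct computation in the internal language, using the universal properties of the copowers and of the coinserter.

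The \emph{moreover} clause has two halves. Well-poweredness is straightforward: for a subobject $S \subseteq X$, the covering property gives $S = \im(\cA(G,S)\cdot G \to X)$, so $S$ is determined by the sub-poset $\cA(G,S) \subseteq \cA(G,X)$; hence the subobjects of $X$ inject into the small set of subsets of $\abs{\cA(G,X)}$. For cocompleteness I would exploit the tensor--hom adjunction $(-\cdot G) \dashv \cA(G,-) \colon \cA \to \Pos$, which exists precisely because $G$ is a generator. I would first check that $\cA(G,-)$ is conservative: the covering property shows it reflects both surjections and embeddings (for surjections, fullness of $\cA(G,f)$ forces every point $G \to Y$ through $\im(f)$, so $\im(f)=Y$ by covering), and an isomorphism is exactly a surjective embedding by Lemma~\ref{l:surj-emb-iso}. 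I would then argue that $\cA$ is monadic over $\Pos$ via Beck's theorem, and deduce cocompleteness from that of $\Pos$, using that the Eilenberg--Moore category of a monad on a cocomplete category is cocomplete as soon as it admits reflexive coequalizers, which exactness is meant to provide.

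The main obstacle is this last step. Verifying Beck's conditions---that $\cA$ has, and $\cA(G,-)$ preserves, coequalizers of $\cA(G,-)$-split pairs---amounts to lifting absolute coequalizer data from $\Pos$ to $\cA$, and the cleanest route is to realise the relevant coequalizers (and, more generally, the reflexive coequalizers needed for cocompleteness of the category of algebras) as quotients by congruences. As in part (b), the crux is to show that the congruences generated by the pairs in question actually exist in $\cA$; here I expect to rely on well-poweredness together with exactness, rather than on any coherent or infinitary structure, which $\cA$ is not assumed to possess. Making this bootstrapping precise---obtaining enough colimits from exactness and a generator \emph{without} presupposing the very colimits one is constructing---is the delicate part of the argument.
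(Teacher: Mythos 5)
Your part (a) is correct, though it takes a slightly more hands-on route than the paper (which simply observes that $-\cdot G$, being a left adjoint to the embedding-preserving functor $\cA(G,-)$, preserves surjections, so $\abs{\cA(G,X)}\cdot G \epi \cA(G,X)\cdot G \epi X$). The real problems are in part (b) and the \emph{moreover} clause, where your argument has a genuine gap that you yourself flag but do not close. The missing idea is that the Galois connection $\Sub(X) \rightleftarrows \P(\abs{\cA(G,X)})$ gives more than well-poweredness: it exhibits $\Sub(X)$ as a reflective sub-poset of a powerset, hence as a \emph{complete} lattice. Completeness of $\Sub(X^2)$ is precisely what lets one form the congruence generated by an \emph{arbitrary} relation $R\subseteq X^2$, as the least fixpoint of the expansive monotone map $R\mapsto [\leq_X]\lor R\lor (R\circ R)$ via Knaster--Tarski; exactness then turns that congruence into a quotient, which represents the functor $C_R(Y)=\setst{f\colon X\to Y}{R\subseteq\lker(f)}$ and hence yields all coinserters. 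Coproducts then come from presentations $T_i\cdot G\rightrightarrows S_i\cdot G\epi A_i$ and a quotient of $(\coprod_i S_i)\cdot G$. Without this mechanism you have no way to manufacture the colimits you need.

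Your two substitutes for it both fail or remain circular. For $P\cdot G$, you claim that because $\leq_P$ is transitive the relevant congruence on $\abs{P}\cdot G$ ``can be written down directly'' as the image of a copower built from $\leq_P$ and $[\leq_G]$. But in an abstract regular category the epi-diagonal $[\leq_{\abs{P}\cdot G}]$ of a copower is not the disjoint union of copies of $[\leq_G]$ (the copower's universal property governs maps \emph{out} of it, not generalised elements \emph{into} it), so there is no reason your candidate relation contains $[\leq_{\abs{P}\cdot G}]$ or is transitive; verifying the congruence axioms is not ``a direct computation'' and in fact requires the closure operator you are trying to avoid. For cocompleteness, the monadicity route via Beck's theorem is circular exactly where you say it is delicate: Beck requires $\cA$ to \emph{have} coequalisers (coinserters) of $\cA(G,-)$-split pairs, and Linton's argument for cocompleteness of Eilenberg--Moore categories requires reflexive coinserters in $\cA$ --- these are the very colimits whose existence is at stake, and ``exactness'' alone only provides quotients of relations already known to be congruences. (Note also that the paper's monadicity result, Theorem~\ref{t:KV-charact}, assumes $G$ projective and is deduced \emph{after} this proposition, not used to prove it.) To repair the proof, replace both steps by the completeness-plus-Knaster--Tarski argument.
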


\begin{proof}
\ref{i:gen-implies-dis-gen} Suppose $G$ is a generator. Since all tensors $P\cdot G$ exist, there is an adjunction
	\[\begin{tikzcd}
		\Pos \arrow[yshift=-5pt]{rr}[swap]{-\cdot G} & \text{\tiny{$\top$}} & \A \arrow[yshift=5pt]{ll}[swap]{\A(G,-)}.
	\end{tikzcd}\]
The right adjoint preserves (finite) weighted limits, and in particular embeddings, and so the left adjoint $-\cdot G$ preserves surjections (see, e.g., \cite[Proposition~2.1.12]{Cisinski2019}). Therefore, the identity map $\abs{\A(G,X)}\epi \A(G,X)$ in $\Pos$ induces a surjection 
\[
\abs{\A(G,X)}\cdot G \epi \A(G,X)\cdot G.
\] 
If $X$ is any object of $\cA$, the canonical map $\A(G,X)\cdot G\to X$ is a surjection, and so the composite yields a surjection $\abs{\A(G,X)}\cdot G\epi X$. Hence, $G$ is a discrete generator.

\ref{i:dis-gen-implies-gen} Assume that $\cA$ is exact, and let $G$ be a discrete generator. To show that $G$ is a generator, it is enough to prove that the tensor $P\cdot G$ exists for every poset $P$. This, in turn, follows if we prove the second part of the statement, namely that $\cA$ is well-powered and cocomplete.
Fix an arbitrary $X \in \cA$, and write $\P(T)$ for the power-set of a set~$T$. We claim that there is a Galois connection
	\[\begin{tikzcd}
		\Sub(X) \ar[r,shift left=2.5pt,"R"] & \P(|\cA(G,X)|) , \ar[l,shift left=2.5pt,"L"]
	\end{tikzcd}\]
where $R$ sends $Y \subseteq X$ to the set of arrows $G\to X$ that factor through $Y$, and $L$ sends $S \subseteq |\cA(G,X)|$ to the image of $S\cdot G \to X$. Since every $Y \subseteq X$ is covered by $|\cA(G,Y)| \cdot G$, we have $L(R(Y)) = Y$. Furthermore, every morphism in $S \subseteq |\cA(G,X)|$ factors through the image of $S \cdot G \to X$, that is $S \subseteq R(L(S))$. Hence, $\Sub(X)$ is a reflective sub-poset of $\P(|\cA(G,X)|)$. This entails that $\Sub(X)$ is small and complete.
	
Next, we prove that $\cA$ admits coinserters and small coproducts, hence it is cocomplete; cf.\ \cite[Proposition~5.2]{Kelly1989}. 
Consider arrows $u,v \colon Y \rightrightarrows X$ in $\cA$. For any relation $R\subseteq X^{2}$, define the functor
\[ 
C_{R}\colon \cA\to \Pos, \ \ C_R(Y) \coloneqq \setst{f\colon X \to Y}{R \subseteq \lker(f)} . 
\]
 The universal property of the coinserter of $u$ and $v$ is to represent the functor $C_{\im(u,v)}$, where the relation $\im(u,v)$ is the image of $u,v \colon Y \rightrightarrows X$. We claim that $C_R$ is always representable. In particular, it will follow that the coinserter of $u$ and $v$ exists.

If $R$ is a congruence, then $C_{R}$ is representable by Lemma~\ref{l:quotient-coinserter}, because $\cA$ is exact. In general, note that $R$ is a congruence exactly when it is a fixpoint of
	\[ \Sub(X^2)\to \Sub(X^2), \ \ R \mapsto [\leq_X] \lor R \lor (R\circ R) . \]
This map is expansive and order-preserving. As the poset $\Sub(X^2)$ is complete, for every $R \subseteq X^2$ there is a smallest fixpoint $R^*$ above $R$, by the Knaster--Tarski fixpoint theorem. Moreover, since $\lker(f)$ is a congruence, $C_R = C_{R^*}$. Thus, since $C_{R^*}$ is representable, so is $C_R$. We shall call the object representing $C_R$ the \emph{quotient} of $X$ by~$R$.
	
	It remains to show that $\cA$ admits small coproducts. Let $(A_i)_i$ be a set of objects of $\cA$. For each $i$, let $T_i\cdot G \rightrightarrows S_i\cdot G \epi A_i$ be a presentation of $A_i$ as in the proof of Theorem~\ref{t:presh-emb}, with $S_{i}$ and $T_{i}$ sets. The coproduct of the $(A_i)_i$ is then the quotient of $(\coprod_i S_i)\cdot G$ by the union of the images of the arrows $Y_j\cdot G \rightrightarrows (\coprod_i S_i) \cdot G$.
\end{proof}

Although this fact will not be needed later on, we will prove that any exact category~$\cA$ that has a projective generator~$G$ is monadic over $\Pos$. When $\cA$ has coinserters and~$G$ is presentable, this follows from \cite[Theorem~5.9]{KV2017}. 
We offer a proof that does not rely on the aforementioned theorem (which uses a result by Bloom and Wright~\cite{BW83}), but rather on Theorem~\ref{t:presh-emb}.
The analogue of this result for ordinary categories was proved in \cite[Proposition~3.2]{Vitale1994}, as part of a characterisation of categories monadic over $\Set$.

\begin{theorem}\label{t:KV-charact}
If $\A$ is an exact category admitting a projective (discrete) generator~$G$, then the following adjunction is monadic:
	\[\begin{tikzcd}
		\Pos \arrow[yshift=-5pt]{rr}[swap]{-\cdot G} & \text{\tiny{$\top$}} & \A \arrow[yshift=5pt]{ll}[swap]{\A(G,-)}.
	\end{tikzcd}\]
\end{theorem}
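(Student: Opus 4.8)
The plan is to verify the hypotheses of the enriched Beck monadicity theorem for the adjunction $-\cdot G \dashv \cA(G,-)$. Monadicity in the poset-enriched setting requires that the right adjoint $\cA(G,-)$ be conservative and that $\cA$ admit and $\cA(G,-)$ preserve coinserter-split coequalisers (the appropriate enriched analogue of absolute coequalisers, or more precisely the $\Pos$-enriched version of split pairs). The key observation making this tractable is that, since $G$ is a \emph{projective} generator, Lemma~\ref{l:projective-discrete-gen-cover} tells us that the copowers $S\cdot G$ form a projective cover $\Ps$ of $\cA$, so Theorem~\ref{t:presh-emb} applies and gives a fully faithful regular nerve $N_\Ps \colon \cA \to [\Ps^\op,\Pos]$. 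This is what I would leverage rather than checking the monadicity conditions by hand.

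First I would identify the comparison functor. The monad induced on $\Pos$ is $T = \cA(G, -\cdot G)$, and since $G$ is projective and a discrete generator, the copowers $S\cdot G$ with $S$ a set constitute a projective cover by Lemma~\ref{l:projective-discrete-gen-cover}. Restricting the nerve to this cover, note that the full subcategory $\Ps$ is equivalent to the Kleisli-type category of free $T$-algebras: the hom-poset $\cA(S\cdot G, S'\cdot G)$ is, by the copower adjunction, isomorphic to $\Pos(S, \cA(G, S'\cdot G)) = \Pos(S, T(S'))$, which is exactly the Kleisli hom. Thus $\Ps$ is (equivalent to) the category of free $T$-algebras, and under the equivalence $\cA \simeq \Sh(\Ps, J_{\mathrm{reg}}) = [\Ps^\op,\Pos]$ afforded by Theorem~\ref{t:presh-emb}, the category of $T$-algebras $\Pos^T$ embeds compatibly.

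The decisive step is to match $\cA$ with $\Pos^T$. I would argue that the category of Eilenberg--Moore algebras $\Pos^T$ is precisely the idempotent-split (Cauchy) completion of $\Ps$ relative to $\Pos$, equivalently the reflective subcategory of $[\Ps^\op,\Pos]$ on the presheaves that are quotients of representables by representably-covered congruences. But this is exactly the description of the essential image of $N_\Ps$ given in the second part of Theorem~\ref{t:presh-emb}, using that $\cA$ is exact. Since every $T$-algebra arises as a coinserter (reflexive coidentifier) of free algebras and $\cA$ has all such quotients by exactness, the comparison functor $\cA \to \Pos^T$ is essentially surjective; Theorem~\ref{t:presh-emb} already gives that it is fully faithful. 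Hence it is an equivalence, and the adjunction is monadic.

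The main obstacle I anticipate is the bookkeeping in the third step: one must be careful that the enriched notion of $T$-algebra corresponds to the congruence-quotient description rather than to a coarser or finer completion, and that the coinserters computing $T$-algebras coincide with the quotients by congruences produced in the proof of Proposition~\ref{p:gen-iff-dicrete-gen-and-coinserters}. Concretely, the subtlety is that a presentation $T_i\cdot G \rightrightarrows S_i\cdot G \epi A_i$ exhibits each $A_i$ as a quotient by the \emph{image congruence} $\lker(q)$, and I must check that feeding this through the free/forgetful adjunction yields exactly the canonical presentation of the corresponding $T$-algebra by free algebras. Once that identification is in place, conservativity of $\cA(G,-)$ is immediate from $G$ being a (discrete) generator, and the remaining monadicity conditions follow formally from $\cA$ being exact and the comparison being fully faithful and essentially surjective.
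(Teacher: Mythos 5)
Your overall strategy---abandon a direct verification of Beck's conditions, use the projective cover $\Ps=\setst{S\cdot G}{S\in\Set}$ and Theorem~\ref{t:presh-emb}, identify $\Ps$ with the free $\T$-algebras on sets via $\cA(S\cdot G,S'\cdot G)\cong\Pos(S,\T(S'))$, and compare essential images inside $[\Ps^\op,\Pos]$---is exactly the route the paper takes. However, the decisive step, which you yourself flag as the main obstacle, is asserted rather than proved, and that is where the gap lies. A first, local error: Theorem~\ref{t:presh-emb} does \emph{not} give $\cA\simeq[\Ps^\op,\Pos]$; it gives a fully faithful regular embedding whose essential image (using exactness of $\cA$) is the subcategory $\cC$ of quotients of representables by representably covered congruences. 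More seriously, your claim that the Eilenberg--Moore category is ``precisely'' that subcategory (or the Cauchy completion of $\Ps$, which in general only yields retracts of free algebras) is essentially the statement to be proved, and you offer no argument for the inclusion $\cC\subseteq\EM(\T)$, which would require exactness of $\EM(\T)$---something you never establish. The paper never proves this equality: it only needs the one-sided containment that the essential image of $\EM(\T)\into[\Ps^\op,\Pos]$ is \emph{contained in} $\cC$, which follows from the canonical presentation of an algebra by free algebras without any exactness hypothesis on $\EM(\T)$, and then concludes by sandwiching $\cA\into\EM(\T)\into\cC\simeq\cA$, forcing the comparison to be an equivalence.

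Second, for either your full-faithfulness claim or your essential-surjectivity argument to go through, you must first know that $\EM(\T)$ is a regular category in which the free algebras on sets are projective and form a projective cover; otherwise there is no fully faithful nerve $\EM(\T)\to[\Ps^\op,\Pos]$ to compose with, full faithfulness of the comparison does not follow from Theorem~\ref{t:presh-emb} alone, and one cannot argue that the comparison (being regular) carries the quotient in $\cA$ of a presentation by free objects to the corresponding quotient of free algebras. The paper obtains these facts from the observation that $\T$ preserves surjections (because $G$ is projective and $-\cdot G$ preserves surjections) together with \cite[Corollary~4.14]{KV2017} and the surrounding results, which give regularity of $\EM(\T)$ and the preservation/reflection properties of the forgetful functor needed to see that the free algebra on a point is a projective discrete generator. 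Supplying these two ingredients---the structural facts about $\EM(\T)$ and the replacement of your ``precisely'' by a one-sided containment plus the sandwich---turns your outline into the paper's proof.
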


\begin{proof}
Write $\T$ for the monad on $\Pos$ induced by the above adjunction, and $\EM(\T)$ for its category of Eilenberg--Moore algebras. By Lemma~\ref{l:projective-discrete-gen-cover}, the full subcategory $\cB \subseteq \cA$ spanned by the copowers $S\cdot G$, with $S$ a set, is a projective cover.

Similarly, let $\cK\subseteq\EM(\T)$ be the full subcategory spanned by the copowers of $F$, the free algebra on a one-element poset; note that $\cK$ can be identified with the full subcategory consisting of the algebras free on discrete posets. The monad $\T$ preserves surjections because~$G$ is projective and $-\cdot G$ preserves surjections (for the latter fact, cf.~the proof of Proposition~\ref{p:gen-iff-dicrete-gen-and-coinserters}\ref{i:gen-implies-dis-gen}). It follows from (the proof of) \cite[Corollary~4.14]{KV2017} that $\EM(\T)$ is regular and the forgetful functor $U\colon \EM(\T)\to \Pos$ reflects weighted limits. Further, $U$ preserves and reflects surjections, and its left adjoint preserves them; cf.~Proposition~4.12 in \emph{op.~cit}. This implies that $F$ is a projective discrete generator, and so $\cK$ is a projective cover of $\EM(\T)$ by Lemma~\ref{l:projective-discrete-gen-cover}.

The full inclusion of the Kleisli category of $\T$ into $\A$ restricts to an equivalence $\cK\simeq \cB$, which is also the restriction of the comparison functor $\cA \to \EM(\T)$. The latter is a regular functor. Just observe that it preserves finite weighted limits and surjections because $\A(G,-)\colon \A\to\Pos$ preserves them and $U\colon \EM(\T)\to \Pos$ reflects them. Identifying $\cK$ and $\cB$ as one and the same category $\Ps$, by Theorem~\ref{t:presh-emb} we get a commutative diagram of regular functors as displayed below.
\[\begin{tikzcd}[column sep = 0.3em]
& {[\Ps^{\op},\Pos]} \arrow[hookleftarrow]{dr} & \\
\A \arrow{rr} \arrow[hookrightarrow]{ur} & & \EM(\T) 
\end{tikzcd}\]
In particular, the comparison functor $\cA\to\EM(\T)$ is fully faithful. Moreover, since $\cA$ is exact, it is equivalent to the full subcategory $\cC\subseteq [\Ps^{\op},\Pos]$ consisting of those presheaves that are obtained as the quotient of a representable presheaf by a congruence that is covered by another representable presheaf. The proof of Theorem~\ref{t:presh-emb} shows that even in the absence of exactness, the essential image of $\EM(\T)\into [\Ps^{\op},\Pos]$ is contained in $\cC$.

Since the composite $\cA\into \EM(\T)\into \cC$ is an equivalence, we conclude that the comparison functor $\cA\to \EM(\T)$ is an equivalence.
\end{proof}

\begin{remark}
From Theorem~\ref{t:KV-charact}, we can recover the result of Flagg stating that the underlying-poset functor $\KOrd\to\Pos$ is monadic~\cite[Theorem~12]{Flagg1997}. This implies that $\KOrd$ is equivalent to the category of algebras for the prime filter monad on $\Pos$, which takes a poset $P$ to the poset of prime filters of the lattice of upsets of $P$. 

Just recall from~\cite[Proposition~5.8]{AV2022} that $\KOrd$ is exact. Since its terminal object~$\one$ is a projective (discrete) generator, by Theorem~\ref{t:KV-charact}, the following adjunction is monadic:
	\[\begin{tikzcd}
		\Pos \arrow[yshift=-5pt]{rr}[swap]{-\cdot \one} & \text{\tiny{$\top$}} & \KOrd \arrow[yshift=5pt]{ll}[swap]{\KOrd(\one,-)}.
	\end{tikzcd}\]
Thus, the functor $\KOrd(\one,-)$, which is naturally isomorphic to the underlying-poset functor, is monadic. It is also not difficult to verify directly that, for each poset $P$, the tensor $P\cdot\one$ is isomorphic to the \emph{Stone--\v{C}ech--Nachbin order compactification} of $P$, cf.~\cite{Nachbin1965}, whose elements are the prime filters of the lattice of upsets of $P$.
\end{remark}

\section{Well-pointed cocomplete pretoposes}\label{s:well-pointed-pretoposes}

In this section, we study pretoposes whose terminal object $\one$ is a generator.
By Lemma~\ref{l:one-proj} below, this implies that $\one$ is a projective generator, and so the pretopos is ``controlled'' by its projective cover $\setst{S\cdot\one}{S\in\Set}$ by Corollary~\ref{c:proj-cover-equi}. Further, we shall see that these pretoposes are precisely the well-pointed cocomplete pretoposes.
For any non-degenerate pretopos in this class, we will construct a functor into $\KOrd$ that sends $S\cdot\one$ to the Stone--\v{C}ech compactification $\beta(S)$. While the functor itself is not an equivalence, it will allow us to characterise $\KOrd$ as a pretopos in Section~\ref{s:char-KOrd}.

Throughout this section, we fix an arbitrary pretopos $\cC$. We say that $\cC$ is \emph{degenerate} if $\proves \bot$, i.e., $\cC$ is equivalent to the category with only one object and one morphism. Given $X\in \cC$, recall from Section~\ref{s:constants} that a point, or constant, of $X$ is an arrow $\one \to X$; each such point is an embedding. 

\begin{lemma}\label{l:one-proj}
If $\one$ is a generator of $\cC$, then it is projective.
\end{lemma}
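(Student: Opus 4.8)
The plan is to unwind the definition of projectivity. We must show that the functor $\cC(\one,-)\colon \cC\to\Pos$ preserves surjections, i.e.\ that for every surjection $f\colon A\epi B$ the induced map $\cC(\one,f)\colon \cC(\one,A)\to\cC(\one,B)$ is a surjection in $\Pos$. Since surjections in $\Pos$ are exactly the surjective order-preserving maps, and $\cC(\one,f)$ is automatically order-preserving (composition is monotone in a poset-enriched category), the entire content reduces to showing that $\cC(\one,f)$ is surjective on underlying sets. Concretely: given a point $b\colon \one\to B$, I must produce a point $a\colon \one\to A$ with $af=b$.

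First I would form the pullback of $f$ along $b$, obtaining
\[
\begin{tikzcd}
P \ar[r,"\pi_A"] \ar[d,"p"'] & A \ar[d,"f"] \\
\one \ar[r,"b"'] & B
\end{tikzcd}
\]
which exists since $\cC$, being a pretopos, has finite weighted limits. Because surjections are stable under pullback in a regular category, the projection $p\colon P\to\one$ is again a surjection. The crux of the argument is then to show that $P$ has a point, i.e.\ that $\cC(\one,P)\neq\emptyset$; equivalently, that the surjection $p$ onto the terminal object admits a section. This is exactly where the generator hypothesis enters and is the main obstacle.

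To establish $\cC(\one,P)\neq\emptyset$, I would argue by contradiction using the covering property of the generator (Definition~\ref{d:generator}\ref{i:gen-covers}). If $\cC(\one,P)=\emptyset$, then the canonical surjection $\cC(\one,P)\cdot\one\epi P$ becomes $\emptyset\cdot\one\epi P$. The copower $\emptyset\cdot\one$ is the initial object, which in the coherent category $\cC$ is the strict \emph{empty} object $0$; since any morphism out of $0$ has image the bottom subobject, a surjection $0\epi P$ forces $P\cong 0$. But then $p\colon P\epi\one$ is a surjection out of the empty object, so its image, the bottom subobject of $\one$, equals $\one$, whence $\one\cong 0$ and $\cC$ is degenerate. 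In that case $\cC(\one,P)$ is a singleton, contradicting $\cC(\one,P)=\emptyset$. Hence $\cC(\one,P)\neq\emptyset$.

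Finally I would complete the lift. Choosing any point $s\colon \one\to P$, set $a\coloneqq s\pi_A\colon \one\to A$. Then $af=s\pi_A f=s p b$, and since $\one$ is terminal the endomorphism $sp\colon \one\to\one$ equals $\id_\one$, so $af=b$ as required. This shows $\cC(\one,f)$ is surjective on points, hence a surjection in $\Pos$, and therefore $\one$ is projective.
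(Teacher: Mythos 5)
Your proof is correct and follows essentially the same route as the paper: pull back the surjection along the point, use stability of surjections under pullback, and invoke the generator property to show the fibre has a point. The only (cosmetic) difference is in the degenerate case, which you dispatch by contradiction (a degenerate category has singleton hom-sets) whereas the paper simply notes that $\one$ is trivially projective there.
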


\begin{proof}
	Let $f \colon X \epi Y$ be a surjection and let $p \colon \one \to Y$ be a point. The unique arrow $f^{-1}(p) \to \one$ is surjective because surjections are stable under pullbacks. If $f^{-1}(p)$ has no points, then it is empty, since $\one$ is a generator. In this case, $\one$ is also empty and the pretopos is degenerate, so $\one$ is trivially projective.  Otherwise, $f^{-1}(p)$ has a point $\one \to f^{-1}(p) \subseteq X$, which yields a point $q\colon \one \to X$ satisfying $qf=p$.
\end{proof}

\begin{definition}
$\cC$ is \emph{well-pointed} if for every subobject $\phi \subseteq X$, we have $\proves \phi(x)$ if and only if, for every point $c \colon \one \to X$, $\proves \phi(c)$.
\end{definition}

\begin{proposition}\label{prop:well-pointed-cocomplete}
$\cC$ is well-pointed and cocomplete if, and only if, $\one$ is a generator.
\end{proposition}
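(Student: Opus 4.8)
The plan is to prove both implications of Proposition~\ref{prop:well-pointed-cocomplete}, characterising when $\one$ is a generator in the pretopos $\cC$.

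For the easier direction, suppose $\one$ is a generator. By Lemma~\ref{l:one-proj}, $\one$ is then a projective generator, and by Proposition~\ref{p:gen-iff-dicrete-gen-and-coinserters} (applied with the fact that a pretopos is exact), $\cC$ is well-powered and cocomplete. So cocompleteness comes for free. It remains to verify well-pointedness. Given a subobject $\phi\subseteq X$, one direction is trivial: if $\proves\phi(x)$, then $\proves\phi(c)$ for every point $c\colon\one\to X$ by substitution. For the converse, suppose $\proves\phi(c)$ for every point $c$. I would use that the canonical arrow $\abs{\cC(\one,X)}\cdot\one\to X$ is a surjection (item~(ii) of the definition of discrete generator, which holds since generators are discrete generators by Proposition~\ref{p:gen-iff-dicrete-gen-and-coinserters}\ref{i:gen-implies-dis-gen}). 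Each component of this copower corresponds to a point $c\colon\one\to X$, which factors through $\phi$ by hypothesis; hence the whole surjection factors through $\phi\subseteq X$. Since surjections are left-orthogonal to embeddings (Lemma~\ref{lem:surj-is-so}), the inclusion $\phi\subseteq X$ must be a surjection, thus an isomorphism by Lemma~\ref{l:surj-emb-iso}, i.e.\ $\proves\phi(x)$.

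For the harder direction, assume $\cC$ is well-pointed and cocomplete. I must show $\one$ is a generator, i.e.\ that all tensors $P\cdot\one$ exist (cocompleteness gives this, since tensors are weighted colimits) and that for every $X$ the canonical arrow $\cC(\one,X)\cdot\one\to X$ is a surjection. The existence of tensors is immediate from cocompleteness. For the covering condition, let $S\coloneqq\cC(\one,X)$ be the set of points of $X$ and let $e\colon\abs{S}\cdot\one\to X$ be the canonical arrow. I would factor $e$ as a surjection followed by an embedding, $\abs{S}\cdot\one\epi\im(e)\into X$, and show $\im(e)=X$. The key observation is that every point $c\colon\one\to X$ factors through $\im(e)$, by construction of $e$ as the copairing of all points. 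Thus the subobject $\im(e)\subseteq X$ contains every point of $X$; by well-pointedness, $\proves(x\in\im(e))$, forcing $\im(e)=X$ and so $e$ is a surjection.

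The main subtlety to get right is the passage between ``every point factors through $\phi$'' and well-pointedness as stated in the internal language. Well-pointedness says $\proves\phi(x)$ iff $\proves\phi(c)$ for all points $c$; I must check that ``$c$ factors through $\phi\subseteq X$'' is exactly the assertion $\proves\phi(c)$, which holds because $\phi(c)$ is the pullback of $\phi$ along $c\colon\one\to X$, and $\proves\phi(c)$ means this pullback is all of $\one$, equivalently $c$ factors through $\phi$ (using that points are embeddings, Section~\ref{s:constants}). Once this dictionary is in place, both directions are short. The only place where genuine care is needed is ensuring that cocompleteness really does supply the tensors $P\cdot\one$ for arbitrary posets $P$ and not merely copowers; but since tensors are a form of weighted colimit and a cocomplete poset-enriched category has all weighted colimits, this is immediate.
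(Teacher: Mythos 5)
Your proof is correct and follows essentially the same route as the paper: cocompleteness via Proposition~\ref{p:gen-iff-dicrete-gen-and-coinserters}, well-pointedness from the fact that a subobject containing all points must contain the image of the canonical covering arrow from a copower of $\one$, and conversely that well-pointedness forces $\im\big(\cC(\one,X)\cdot\one\to X\big)=X$. The only cosmetic differences are your use of the copower $\abs{\cC(\one,X)}\cdot\one$ where the paper uses the tensor, and the (unneeded) invocation of Lemma~\ref{l:one-proj}.
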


\begin{proof}
	Suppose that $\one$ is a generator. By Proposition~\ref{p:gen-iff-dicrete-gen-and-coinserters}, $\cC$ is cocomplete. Moreover, if $X\in \cC$, and $Y$ is a subobject of $X$ that contains all the points $\one\to X$, then it contains the image of $\cC(\one,X)\cdot\one\epi X$. That is, $Y=X$. This shows that $\cC$ is well-pointed.
	
	For the other direction, suppose that $\cC$ is well-pointed and cocomplete. In particular, the tensor $P\cdot\one$ exists for every poset $P$. For each $X\in \cC$, the image of $\cC(\one,X)\cdot\one\to X$ contains all the points of $X$, hence it covers $X$ because $\cC$ is well-pointed.
\end{proof}

\begin{remark}
Direct inspection shows that Lemma~\ref{l:one-proj} and Proposition~\ref{prop:well-pointed-cocomplete} hold not only for pretoposes, but more generally for exact categories.
\end{remark}

To define a functor $\cC\to \KOrd$, whenever $\cC$ is non-degenerate and its terminal object is a generator, we consider upward complemented subobjects in $\cC$. These can be defined in any regular category as follows.

\begin{definition}
	In a regular category, each subobject $U \subseteq X$ has an \emph{upward closure} $\up U \subseteq X$ defined as $\setst{x\in X}{\exists u \in U \qcolon u \leq x}$. If $U = \up U$, we say that $U$ is \emph{upward closed}, or that it is an \emph{upward subobject} of $X$. Moreover, a subobject $U \subseteq X$ is \emph{complemented} if it has a complement in $\Sub(X)$. We denote by $\CU(X)$ the sub-poset of $\Sub(X)$ consisting of the upward complemented subobjects of $X$.
\end{definition}

It is not difficult to see, using the internal logic, that for any object $X\in \cC$, $\CU(X)$ is a sublattice of $\Sub(X)$. Furthermore, if $f \colon X \to Y$, then $f^{-1}$ restricts to a lattice homomorphism $\CU(Y) \to \CU(X)$. Unlike $\Sub \colon \cC^\op \to \DL$, the presheaf 
\[
\CU\colon  \cC^\op \to \DL
\]
is poset-enriched. Just observe that, if $f,g\in\C(X, Y)$ satisfy $f\leq g$, then $f(x) \leq g(x)$ and so $f(x) \in U \proves g(x) \in U$ for any $U \in \CU(Y)$. That is, $f^{-1}\leq g^{-1}$.

For the next lemma, recall from Example~\ref{exmp:two-tensor} that $\two$ is the two-element poset $\set{0 < 1}$, and the tensor $\two\cdot\one$ can be characterised in the internal logic as the only object with two constants $0$ and $1$ such that $\proves (x=0)\lor(x=1)$, $\proves 0\leq 1$ and $1\leq 0\proves\bot$. 
\begin{lemma}\label{lem:clopen-up-rep}
	The presheaf $\CU\colon  \cC^\op \to \DL$ is represented by~$\two\cdot\one$.
\end{lemma}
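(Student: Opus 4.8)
The plan is to exhibit, for each object $X$, a natural order-isomorphism $\cC(X,\two\cdot\one)\cong\CU(X)$, with universal element the subobject $\set{1}\subseteq\two\cdot\one$. First I would verify that $\set{1}$ really is an upward complemented subobject of $\two\cdot\one$: recalling the internal description from Example~\ref{exmp:two-tensor}, its complement is $\set{0}$ (since $\proves(x=0)\lor(x=1)$ and $0=1\proves\bot$), and it is upward closed because $\up\set{1}=\setst{x}{1\leq x}=\set{1}$, using $1\leq 0\proves\bot$. Pulling back $\set{1}$ then defines
\[ \Phi_X\colon\cC(X,\two\cdot\one)\to\CU(X),\qquad f\mapsto f^{-1}(\set{1}), \]
which lands in $\CU(X)$ because $f^{-1}$ restricts to a lattice homomorphism $\CU(\two\cdot\one)\to\CU(X)$, and which is natural since pullbacks compose, i.e.\ $(gf)^{-1}(\set{1})=g^{-1}(f^{-1}(\set{1}))$.

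The substance is to show $\Phi_X$ is a bijection by constructing its inverse. Given $U\in\CU(X)$ with complement $V$, I would define a relation $R\subseteq X\times(\two\cdot\one)$ by
\[ R(x,t)\iff (x\in U\land t=1)\lor(x\in V\land t=0) \]
and verify it is an order-preserving functional relation, so that by Proposition~\ref{p:functional-rels} it is the graph of a morphism $\chi_U\colon X\to\two\cdot\one$. Totality, $\proves\exists t\qcolon R(x,t)$, holds because $U$ and $V$ cover $X$. For order-preservation, the only delicate case is $x\in U$ and $x'\in V$ with $x\leq x'$, which forces $t=1$ and $t'=0$: here I use that $U$ is upward closed, so $x\in U$ and $x\leq x'$ give $x'\in U$, contradicting $x'\in V$ since $U\land V=\bot$; thus this case is vacuous and $t\leq t'$ follows. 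One then checks $\Phi_X(\chi_U)=\chi_U^{-1}(\set{1})=U$ and, conversely, $\chi_{\Phi_X(f)}=f$, because for $U=f^{-1}(\set{1})$ the graph of $f$ coincides with $R$, using $\proves(t=0)\lor(t=1)$ in $\two\cdot\one$.

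Finally I would upgrade the bijection to an order-isomorphism. That $\Phi_X$ is order-preserving is exactly the already-noted fact that $f\leq g$ implies $f^{-1}(U)\subseteq g^{-1}(U)$ for an upward subobject $U$, applied to $U=\set{1}$. For order-reflection, assuming $f^{-1}(\set{1})\subseteq g^{-1}(\set{1})$, i.e.\ $f(x)=1\proves g(x)=1$, I would argue $\proves f(x)\leq g(x)$ by cases: when $f(x)=0$ it holds since $0$ is the bottom element, and when $f(x)=1$ the hypothesis gives $g(x)=1$; hence $f\leq g$. Since $\Phi_X$ is a natural order-isomorphism and $\CU(X)$ is a distributive lattice whose operations are determined by its order, the lattice structure transports across $\Phi_X$ and the restriction maps remain lattice homomorphisms, so $\Phi$ is an isomorphism of $\DL$-valued presheaves. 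I expect the main obstacle to be the well-definedness of $\chi_U$, specifically the order-preservation of $R$, as this is the one place where the hypotheses that $U$ is both upward closed and complemented are genuinely needed.
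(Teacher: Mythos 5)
Your proof is correct and follows essentially the same route as the paper's: the universal element $\set{1}\subseteq\two\cdot\one$, the pullback map $f\mapsto f^{-1}(\set{1})$, the inverse constructed as an order-preserving functional relation via Proposition~\ref{p:functional-rels} (with upward-closedness of $U$ ruling out the case $t=1,\ t'=0$), and the case analysis for order-reflection. The only differences are cosmetic: you verify both composites explicitly where the paper combines order-embedding with surjectivity, and you use $\proves 0\leq x$ where the paper runs a four-case check.
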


\begin{proof}
	The universal element, witnessing the representability of $\CU$, is $\set{1} \in \CU(\two\cdot\one)$. It is indeed an upward complemented subobject: its complement is $\set{0}$, and if $1 \leq x$ then either $x=0$ or $x=1$, but the former is impossible since $1 \leq 0 \proves \bot$.
	We must show that the map sending $f \in \cC(X,\two\cdot\one)$ to $f^{-1}(1) \in \CU(X)$ is an order-isomorphism. 
	
	To see that it is an order-embedding, consider $f, g \colon X \to \two\cdot\one$. We claim that if $f(x)=1 \proves g(x)=1$, then $f \leq g$. There are four possible cases, depending on whether $f(x)=0$ or $f(x)=1$, and whether $g(x)=0$ or $g(x)=1$. The only case in which $f(x)\leq g(x)$ does not hold is when $f(x)=1$ and $g(x)=0$. However, the latter cannot occur because $f(x)=1 \proves g(x)=1$.
	
	For surjectivity, let $U \in \CU(X)$ and let $D$ be its complement in $\Sub(X)$. We define a relation $R\subseteq X\times \two\cdot\one$ by
	\[ R(x,y) \iff (U(x) \land y=1) \lor (D(x) \land y=0) . \]
	We have $\proves \exists y \qcolon R(x,y)$, since either $U(x)$ and we take $y=1$, or $D(x)$ and we take $y=0$. To show that $R$ is also order-preserving, suppose that $x \leq x'$, $R(x,y)$ and $R(x',y')$. We need to deduce that $y \leq y'$. By case analysis on $(y,y')$, it is enough to note that $(y=1)\land(y'=0)$ is impossible. In fact, it would mean that $x\leq x' \land R(x,1) \land R(x',0)$. But $R(x,1)$ implies $U(x)$, and $R(x',0)$ implies $D(x')$, contradicting the fact that $U$ is upward closed. By Proposition~\ref{p:functional-rels}, there is a morphism $f \colon X \to \two\cdot\one$ whose graph is~$R$. Since $R(x,1)$ if and only if $U(x)$, $f$ satisfies $f^{-1}(1) = U$.
\end{proof}

\begin{lemma}\label{lem:bivalued}
	If $\one$ is a generator of $\cC$, then its only subobjects are $\bot$ and $\top$.
\end{lemma}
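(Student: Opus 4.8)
The plan is to analyse a subobject $U\subseteq\one$ according to whether or not it has a point, using the covering property of the generator $\one$. Recall that a subobject of $\one$ is an embedding $i\colon U\into\one$, and that, since $\one$ is a generator, the canonical arrow $\cC(\one,U)\cdot\one\to U$ is a surjection by Definition~\ref{d:generator}\ref{i:gen-covers}. I would split into two cases according to whether the set $\cC(\one,U)$ of points of $U$ is empty.

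First, suppose $U$ has a point $c\colon\one\to U$. Composing with the embedding $i$ yields an endomorphism $ci\colon\one\to\one$ of the terminal object, which must be $\id_\one$. Thus $c$ is a section of $i$, and I would observe that $i$ is therefore a surjection: in the internal language, $\proves\exists u\in U\qcolon i(u)=y$ holds because the point $c$ is a witness, as $i(c)=y$ for the generic point $y$ of $\one$ (any two points of the terminal object agree). Being a surjective embedding, $i$ is an isomorphism by Lemma~\ref{l:surj-emb-iso}, so $U=\top$.

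Second, suppose $U$ has no point, i.e.\ $\cC(\one,U)=\emptyset$. Then the copower $\cC(\one,U)\cdot\one$ indexing the covering surjection is the empty copower $\emptyset\cdot\one$, which is the initial object (its universal property gives $\cC(\emptyset\cdot\one,X)\cong\Pos(\emptyset,\cC(\one,X))=\{\ast\}$); in a pretopos this is the strict empty object $\bot$, characterised by $\top\proves\bot$. Hence there is a surjection $g\colon\bot\epi U$. Applying the image functor $g[-]\colon\Sub(\bot)\to\Sub(U)$ to the identity $\top=\bot$ holding in $\Sub(\bot)$, and using that $g[\bot]=\bot$ (left adjoints preserve the least element) while $g[\top]=\top$ (since $g$ is a surjection), I obtain $\top=g[\top]=g[\bot]=\bot$ in $\Sub(U)$. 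Therefore $U$ is itself an empty object, that is, $U=\bot$ as a subobject of $\one$.

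The only routine verifications are that the empty copower is the initial (strict empty) object and that the image functor preserves $\bot$ while surjections preserve $\top$; these are standard consequences of the coherent structure. I expect the sole genuinely delicate step to be the point-free case, where one must pass correctly from the existence of a surjection out of the strict initial object to the conclusion that $U$ is empty; the chain $g[\top]=g[\bot]$ makes this transparent. As an alternative route, this case also follows from well-pointedness together with the covering property, via Proposition~\ref{prop:well-pointed-cocomplete}, since a subobject of $\one$ with no point cannot contain the unique point $\id_\one$ of $\one$.
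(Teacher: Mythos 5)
Your proof is correct and follows essentially the same route as the paper's: split on whether the subobject of $\one$ has a point, use that a pointed subobject of $\one$ is a retract and hence a surjective embedding (Lemma~\ref{l:surj-emb-iso}), and use that a pointless object is covered by the empty copower, i.e.\ the strict initial object, hence is empty. (Only your closing aside is off: well-pointedness alone would show a pointless $U\subseteq\one$ is not $\top$, not that it equals $\bot$ --- but your main argument does not rely on it.)
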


\begin{proof}
	If $\one$ is a generator of $\cC$, then every object with no points is the empty coproduct, and so it is empty. Thus, any non-empty subobject $A \subseteq \one$ has a point. The composite $\one \to A \into \one$ is surjective because it is the identity, and therefore $A = \one$.
\end{proof}

\begin{lemma}\label{lem:compl-up-tensor}
	If $\cC$ is non-degenerate and $\one$ is a generator, then $\CU(P\cdot\one)\cong \Pos(P,\two)$ for each poset $P$, naturally in $P$.
\end{lemma}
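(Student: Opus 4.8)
The plan is to combine the representability of $\CU$ established in Lemma~\ref{lem:clopen-up-rep} with the defining adjunction of the tensor, thereby reducing the whole statement to a computation of the points of $\two\cdot\one$.

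First I would apply Lemma~\ref{lem:clopen-up-rep}, which provides a natural isomorphism $\CU(X)\cong\cC(X,\two\cdot\one)$ for every $X\in\cC$. Specialising to $X=P\cdot\one$ gives $\CU(P\cdot\one)\cong\cC(P\cdot\one,\two\cdot\one)$. Since $\one$ is a generator, the tensor $P\cdot\one$ exists, and its universal property yields a further isomorphism $\cC(P\cdot\one,\two\cdot\one)\cong\Pos(P,\cC(\one,\two\cdot\one))$, natural in $P$. Thus it only remains to identify the poset of points $\cC(\one,\two\cdot\one)$ with $\two$.

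This identification is the key step, and it is where non-degeneracy enters. Applying Lemma~\ref{lem:clopen-up-rep} once more with $X=\one$ gives $\cC(\one,\two\cdot\one)\cong\CU(\one)$. Now the order on $\one$ is trivial (the unique map $T\to\one$ forces $[\leq_{\one}]=\one$), so every subobject of $\one$ is automatically upward closed; and by Lemma~\ref{lem:bivalued} the only subobjects of $\one$ are $\bot$ and $\top$, which are complemented, being mutual complements. Hence $\CU(\one)=\{\bot\leq\top\}$, and the two subobjects are distinct because $\cC$ is non-degenerate. Therefore $\CU(\one)\cong\two$ as posets.

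Chaining the three natural isomorphisms then gives $\CU(P\cdot\one)\cong\Pos(P,\cC(\one,\two\cdot\one))\cong\Pos(P,\two)$, natural in $P$, as required. The main (and essentially only) obstacle is the computation $\cC(\one,\two\cdot\one)\cong\two$; everything else is a formal consequence of representability and the tensor adjunction. As an alternative to invoking $\CU(\one)$, one could instead compute the points of $\two\cdot\one$ directly from its internal description in Example~\ref{exmp:two-tensor}: the entailment $\proves(x=0)\lor(x=1)$ pulls back along any point $c\colon\one\to\two\cdot\one$ to $\top=[c=0]\lor[c=1]$ in $\Sub(\one)$, and since $\top$ is join-irreducible there (again by Lemma~\ref{lem:bivalued} together with non-degeneracy) every point equals $0$ or $1$, with $0\leq 1$ and $0\neq 1$.
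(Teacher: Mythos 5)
Your proposal is correct and follows essentially the same route as the paper: both reduce, via Lemma~\ref{lem:clopen-up-rep} and the tensor adjunction, to showing $\cC(\one,\two\cdot\one)\cong\two$, which in both cases ultimately rests on Lemma~\ref{lem:bivalued} and non-degeneracy. The only cosmetic difference is that your primary route computes $\CU(\one)$ directly rather than reasoning with the constants $0,1$ of $\two\cdot\one$; your stated alternative is exactly the paper's argument.
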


\begin{proof}
By Lemma~\ref{lem:clopen-up-rep}, it suffices to show that $\cC(P\cdot\one,\two\cdot\one) \cong \Pos(P,\two)$. Suppose for a moment that the statement holds when $P$ is the one-element poset, that is $\cC(\one,\two\cdot\one) \cong \two$. Then, for every poset $P$, we have
\[
\cC(P\cdot\one,\two\cdot\one) \cong \Pos(P,\C(\one,\two\cdot\one)) \cong \Pos(P,\two).
\]

It remains to prove that $\cC(\one,\two\cdot\one) \cong \two$, i.e., the constants $0, 1 \colon \one \to \two\cdot\one$ are distinct and for any $c \colon \one \to \two\cdot\one$, either $\proves c=0$ or $\proves c=1$. The constants $0,1$ are distinct since $\cC$ is non-degenerate. Moreover, we know that $\proves (x=0) \lor (x=1)$ and $(x=0) \land (x=1) \proves \bot$, so this holds in particular for $x=c$. By Lemma~\ref{lem:bivalued}, the only pairs of complements in $\Sub(\one)$ are $(\top,\bot)$ and $(\bot,\top)$, and thus either $\proves c=0$ or $\proves c=1$.
\end{proof}

We are now ready to define the desired functor into $\KOrd$.
Suppose that $\cC$ is a non-degenerate pretopos whose terminal object is a generator. By composing the functor $\CU \colon \cC^\op \to \DL$ with the Priestley duality functor $\DL^\op \to \KOrd$ (which is poset-enriched, and whose essential image consists of Priestley spaces~\cite{Priestley1970}), we obtain a functor $\cC \to \KOrd$. By Lemma \ref{lem:compl-up-tensor}, the latter restricts to a functor $\xi$
\begin{equation}\label{eq:functor-xi} 
\begin{tikzcd}[row sep=1.5em]
{\cC} \arrow{r} & {\KOrd} \\
{\setst{S\cdot\one}{S\in\Set}} \arrow[dashed]{r}{\xi} \arrow[hookrightarrow]{u} & {\setst{\beta(S)}{S\in\Set}} \arrow[hookrightarrow]{u}
\end{tikzcd}
\end{equation}
such that $\xi(S\cdot\one) \cong \beta(S)$ naturally in $S$. Here, $\beta(S)$ is the Stone--\v{C}ech compactification of the discrete space $S$, whose elements are the ultrafilters of $\P(S)$.

\section{Order-filtrality, and a characterisation of $\KOrd$}\label{s:char-KOrd}

In this section, we provide a proof of the characterisation of the category $\KOrd$ stated in Theorem~\ref{thm:main-KOrd}. In more detail, we will study conditions ensuring that the functor $\xi$ in eq.~\eqref{eq:functor-xi} is an equivalence, and apply Corollary~\ref{c:proj-cover-equi} to deduce that $\cC \simeq \KOrd$.

For any set $S$, let $\xi_S \colon \cC(\one,S\cdot\one) \to \beta(S)$ be the action of $\xi$ on the appropriate posets of points, where we identify $\beta(S)$ with its set of points.
	Explicitly, $\xi_S$ sends a point $p \colon \one \to S\cdot\one$ to the ultrafilter of all the $T \subseteq S$ such that $p$ is in the inverse image of $\set{1} \subseteq \two\cdot\one$ under the associated arrow $S\cdot\one \to \two\cdot\one$ (recall from Lemmas~\ref{lem:clopen-up-rep} and~\ref{lem:compl-up-tensor} that $\cC(S\cdot\one, \two\cdot\one)\cong \P(S)$). 
	This inverse image, denoted by $\o{T}$, is an upward complemented subobject of $S\cdot\one$. It is possible to show that $\o{T}$ is isomorphic to $T\cdot\one$ with the canonical arrow to $S\cdot\one$, but we will not use this fact.

\begin{remark}
In fact, $\xi_{(-)}$ is the \emph{unique} natural transformation $\cC(\one,-\cdot\one) \to \beta$, because $\beta$ is terminal among the endofunctors on $\Set$ that preserve finite coproducts. This result is due to B{\"o}rger~\cite{Borger1987} and was recently extended by Garner in~\cite{GarUFC2020}. B{\"o}rger's theorem was used by Richter in~\cite{Richter1991} to characterise $\beta$ as the unique monad $T$ on $\Set$ that preserves finite coproducts and satisfies separation and compactness properties. These properties correspond, respectively, to the injectivity and surjectivity of $T \to \beta$. We will adapt this idea to the context of compact ordered spaces.
\end{remark}

\begin{lemma}\label{lem:S-equiv}
	$\xi$ is an equivalence if, and only if, $\xi_S \colon \cC(\one,S\cdot\one) \to \beta(S)$ is a bijection for all sets $S$.
\end{lemma}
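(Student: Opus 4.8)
The plan is to reduce the statement to the full faithfulness of $\xi$ and then to control the hom-posets through the copower universal property. Since $\xi(S\cdot\one)\cong\beta(S)$ by construction and every object of the target subcategory is of the form $\beta(S)$, the functor $\xi$ is automatically essentially surjective. Hence $\xi$ is an equivalence if and only if it is fully faithful, i.e.\ if and only if for all sets $S,T$ the order-preserving map
\[
\xi_{S,T}\colon \cC(S\cdot\one, T\cdot\one)\to\KOrd(\beta(S),\beta(T))
\]
is an order-isomorphism. The forward implication is then immediate: an equivalence is fully faithful, so in particular the hom-action $\cC(\one, S\cdot\one)\to\KOrd(\one,\beta(S))$ is an order-isomorphism (note that $\one\cong 1\cdot\one$ belongs to the source subcategory and $\xi(\one)\cong\one$); under the identification $\KOrd(\one,\beta(S))\cong\beta(S)$ this map is precisely $\xi_S$, which is therefore a bijection.

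For the converse, suppose each $\xi_S$ is a bijection. The first key observation is that $\xi_S$ is then automatically an order-isomorphism, and moreover $\cC(\one, S\cdot\one)$ is discretely ordered. Indeed, $\beta(S)$, being a compact Hausdorff space viewed as a compact ordered space with trivial order, is a discrete poset; since $\xi_S$ is order-preserving and injective, $a\leq b$ in $\cC(\one,S\cdot\one)$ forces $\xi_S(a)=\xi_S(b)$, hence $a=b$. So both sides are discrete and the order-preserving bijection $\xi_S$ is an order-isomorphism.

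The second step upgrades this from points to arbitrary copowers. On the domain, the copower universal property gives an order-isomorphism $\cC(S\cdot\one,T\cdot\one)\cong\Pos(S,\cC(\one,T\cdot\one))=\prod_{s\in S}\cC(\one,T\cdot\one)$, realised by restriction along the injections $\iota_s\colon\one\to S\cdot\one$. On the codomain I will use that $\beta(S)$ is the copower $S\cdot\one$ in $\KOrd$, so that restriction along the points $s\in S\subseteq\beta(S)$ yields an order-isomorphism $\KOrd(\beta(S),\beta(T))\cong\Pos(S,\beta(T))$. Naturality in $S$ of the identifications $\xi(\one)\cong\one$ and $\xi(S\cdot\one)\cong\beta(S)$ ensures that $\xi(\iota_s)$ is the point $s$ of $\beta(S)$, so by functoriality the $s$-component of $\xi_{S,T}(f)$ equals $\xi_T(\iota_s f)$; the resulting square commutes and identifies $\xi_{S,T}$ with $\Pos(S,\xi_T)=\prod_{s\in S}\xi_T$. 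As $\xi_T$ is an order-isomorphism by the first step, so is this product, whence $\xi_{S,T}$ is an order-isomorphism and $\xi$ is fully faithful.

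The main obstacle is the copower decomposition on the $\KOrd$ side, i.e.\ the identification $\beta(S)\cong S\cdot\one$ in $\KOrd$. Bijectivity of the restriction $\KOrd(\beta(S),Y)\to\prod_{s\in S}\KOrd(\one,Y)$ follows from the Stone--\v{C}ech adjunction together with the fact that every continuous map out of the discretely-ordered space $\beta(S)$ is automatically order-preserving. The subtler half is that this restriction \emph{reflects} the order: if $f,g\colon\beta(S)\to Y$ satisfy $f(s)\leq_Y g(s)$ on the dense subset $S$, then $\setst{p\in\beta(S)}{f(p)\leq_Y g(p)}$ is the preimage of the relation ${\leq_Y}\subseteq Y\times Y$ under the continuous map $p\mapsto(f(p),g(p))$, hence closed; since the order on $Y$ is closed and $S$ is dense in $\beta(S)$, this set is all of $\beta(S)$, so $f\leq g$. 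This is exactly the point where the defining property of compact ordered spaces---closedness of the order---enters.
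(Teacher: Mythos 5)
Your proof is correct and follows essentially the same route as the paper's: reduce to full faithfulness (essential surjectivity being automatic), decompose both hom-posets via the copower universal property, and use naturality to identify the hom-action $\cC(S\cdot\one,T\cdot\one)\to\KOrd(\beta(S),\beta(T))$ with a product of copies of $\xi_T$, with discreteness of the order on $\beta(T)$ turning bijectivity into an order-isomorphism. The only difference is that you spell out the identification $\KOrd(\beta(S),Y)\cong\Pos(S,Y)$ (via the Stone--\v{C}ech universal property and the closed-order/density argument), which the paper's diagram~\eqref{diag:S-equiv-B} takes for granted.
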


\begin{proof}
	Note that $\xi$ is an equivalence if, and only if, all the induced maps $\cC(T\cdot\one,S\cdot\one) \to \KOrd(\beta(T),\beta(S))$ are bijective, since the order on the codomain is discrete.
	
	By naturality of $\xi(T\cdot\one) \cong \beta(T)$ in $T$, the transformation 
	\[
	\cC(T\cdot\one,S\cdot\one) \to \KOrd(\beta(T),\beta(S))
	\] 
	is natural in $T$ and $S$. In particular, diagram~\eqref{diag:S-equiv-A} commutes for all $x \in T$, where the horizontal arrows $p_x$ and $q_x$ act by precomposing with the image of $x \colon 1 \to T$ under the functors $-\cdot\one$ and $\beta$, respectively. Consequently, diagram~\eqref{diag:S-equiv-B} also commutes.\\
	\begin{minipage}{0.5\textwidth}
	\begin{equation}\label{diag:S-equiv-A}\begin{tikzcd}
		\cC(T\cdot\one,S\cdot\one) \ar[r,"p_x"] \ar[d] &[-1em] \cC(\one,S\cdot\one) \ar[d]\\
		\KOrd(\beta(T),\beta(S)) \ar[r,"q_x"] & \beta(S)
	\end{tikzcd}\end{equation}
	\end{minipage}\hfill
	\begin{minipage}{0.5\textwidth}
	\begin{equation}\label{diag:S-equiv-B}\begin{tikzcd}
		\cC(T\cdot\one,S\cdot\one) \ar[r,phantom,"\cong"{description,sloped}] \ar[d] &[-2em] \cC(\one,S\cdot\one)^T \ar[d]\\
		\KOrd(\beta(T),\beta(S)) \ar[r,phantom,"\cong"{description,sloped}] & \beta(S)^T
	\end{tikzcd}\end{equation}
	\end{minipage}
	
	\vskip.2\baselineskip \noindent Hence, if $\xi_{S}\colon \cC(\one,S\cdot\one) \to \beta(S)$ is bijective, so is $\cC(T\cdot\one,S\cdot\one) \to \KOrd(\beta(T),\beta(S))$.
\end{proof}

To identify a sufficient condition ensuring that the maps $\xi_{S}$ are bijective, we introduce the notion of order-filtrality, extending the concept of filtrality from~\cite{mr} to the poset-enriched setting. Given an object $X$ of a coherent category, let $\Up(X)$ be the sub-poset of $\Sub(X)$ consisting of the upward subobjects. It is not difficult to see that $\Up(X)$ is a sublattice of $\Sub(X)$. If $L$ is a distributive lattice, we denote by $\Filt(L)$ the set of non-empty filters of $L$, partially ordered by reverse inclusion.

\begin{definition}\label{d:order-filtral}
	An object $X$ of a coherent category is \emph{order-filtral} if the order-preserving map
	\[
	\Up(X) \to \Filt(\CU(X)), \ \ U\mapsto \{V\in \CU(X)\mid U\subseteq V\}
	\]
	is an order-isomorphism.
\end{definition}

\begin{example}
Order-filtral objects in $\Pos$ are finite posets. In $\KOrd$, they coincide with Priestley spaces, i.e., \emph{totally order-disconnected} compact ordered spaces~\cite{Priestley1970}.
\end{example}

We show that, in a well-pointed cocomplete pretopos, order-filtral objects are compact and separated in an appropriate sense defined below. 
Note that in a cocomplete pretopos, each poset of subobjects $\Sub(X)$ is complete. This implies that $\Up(X)$ is also complete, since it is a reflective sub-poset of $\Sub(X)$; the reflector sends $Y \subseteq X$ to $\up{Y}$.

\begin{definition}
An object $X$ of a cocomplete pretopos $\C$ is \emph{compact} if, for every codirected set $\Gamma\subseteq \CU(X)$ of upward complemented subobjects of $X$, 
\[
\bigwedge \Gamma = 0 \ \text{ in $\Up(X)$ } \Longrightarrow \ 0\in \Gamma.
\]
Moreover, $X$ is \emph{separated} if for any two distinct points $p,q\colon \one\to X$, there exists an upward complemented subobject $U\in \CU(X)$ containing exactly one of $p$ or $q$.
\end{definition}

\begin{proposition}\label{p:filtral-iff-compact-sep}
	Let $X$ be an object of a well-pointed cocomplete pretopos. If $X$ is order-filtral, then it is compact and separated.
\end{proposition}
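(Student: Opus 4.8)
The plan is to transport both properties across the order-isomorphism $\theta\colon \Up(X)\to\Filt(\CU(X))$, $U\mapsto\setst{V\in\CU(X)}{U\subseteq V}$, supplied by order-filtrality (Definition~\ref{d:order-filtral}). Two preliminary computations set everything up. First, since $0$ is the least element of $\Up(X)$, we have $\theta(0)=\setst{V\in\CU(X)}{0\subseteq V}=\CU(X)$, the improper filter. Second, for a point $p\colon\one\into X$, the upward closure satisfies $\up\set{p}\subseteq V$ if and only if $p\in V$ (as $V$ is upward closed), so $\theta(\up\set{p})=\setst{V\in\CU(X)}{p\in V}$ is the ``neighbourhood filter'' $F_p$ of $p$. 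Both facts are immediate from the internal language.

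For compactness, I would take a codirected family $\Gamma\subseteq\CU(X)$ with $\bigwedge\Gamma=0$ in $\Up(X)$, and set $F\coloneqq\setst{V\in\CU(X)}{\exists W\in\Gamma,\ W\subseteq V}$. Codirectedness of $\Gamma$ guarantees that $F$ is closed under binary meets, so $F$ is a filter, namely the one generated by $\bigcup_{W\in\Gamma}\theta(W)$, each $\theta(W)$ being the principal filter on $W$. Now $\theta$ is an order-isomorphism between complete lattices, hence preserves infima; and since the order on $\Filt(\CU(X))$ is reverse inclusion, the infimum of the $\theta(W)$ is exactly the filter generated by their union. Therefore $\theta(\bigwedge\Gamma)=F$. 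The hypothesis $\bigwedge\Gamma=0$ then gives $F=\theta(0)=\CU(X)$. In particular $0\in F$, so there is $W\in\Gamma$ with $W\subseteq 0$, i.e.\ $W=0$; thus $0\in\Gamma$, as required.

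For separation, I would take distinct points $p,q\colon\one\into X$ and first observe that $\up\set{p}\neq\up\set{q}$: if they were equal then, since $p\in\up\set{p}$ and $q\in\up\set{q}$, factoring each point through the other's upward closure would yield $q\leq p$ and $p\leq q$ in the hom-poset $\cC(\one,X)$, whence $p=q$ by antisymmetry---a contradiction. Applying the injective map $\theta$ then gives $F_p=\theta(\up\set{p})\neq\theta(\up\set{q})=F_q$. A subobject witnessing $F_p\neq F_q$ is some $V\in\CU(X)$ lying in exactly one of $F_p,F_q$, that is, an upward complemented subobject containing exactly one of $p,q$. Hence $X$ is separated.

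The routine bookkeeping hides one genuinely delicate point, which I expect to be the main obstacle: the order on $\Filt(\CU(X))$ is \emph{reverse} inclusion, so an order-isomorphism carries infima in $\Up(X)$ to generated filters (``unions'') rather than to intersections. The compactness argument hinges on this reversal together with the fact that codirectedness is precisely what makes the union of the principal filters $\theta(W)$ already a filter, so that no further closure is needed when identifying $\theta(\bigwedge\Gamma)$. The separation argument, by contrast, uses only injectivity of $\theta$ and antisymmetry of the hom-posets, and requires no completeness at all.
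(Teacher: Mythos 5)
Your proposal is correct and follows essentially the same route as the paper: compactness is transported along the order-isomorphism $\Up(X)\cong\Filt(\CU(X))$ to the lattice-theoretic fact that a codirected family of (principal) filters whose infimum under reverse inclusion is the improper filter must already contain it, and separation is obtained from ${\uparrow}p\neq{\uparrow}q$ (via antisymmetry) together with injectivity of the comparison map. The paper's own proof is just a terser rendering of the same two observations.
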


\begin{proof}
	Since compactness is a property of $\Up(X)$, it is enough to observe that for any distributive lattice $L$, and every codirected subset $\Gamma\subseteq \Filt(L)$, if $\bigwedge \Gamma = 0$ then $0\in \Gamma$. 
	
	To prove that $X$ is separated, pick distinct points $p, q \colon \one \to X$. The subobjects ${\uparrow}p$ and ${\uparrow}q$ are different, for otherwise $p \leq q$ and $q \leq p$, which implies $p=q$. Hence, there is an upward complemented subobject of $X$ containing one of ${\uparrow}p$ and ${\uparrow}q$, but not both. This shows that $X$ is separated.
\end{proof}

Compact objects, and separated ones, have the following useful closure properties.

\begin{lemma}\label{l:properties-compact-sep}
The following statements hold for every arrow $f\colon X\to Y$ in a well-pointed cocomplete pretopos:
\begin{enumerate}[label=(\alph*)]
\item\label{i:image-compact} If $f$ is a surjection and $X$ is compact, then $Y$ is compact;
\item\label{i:sub-separated} If $f$ is an injection and $Y$ is separated, then $X$ is separated.
\end{enumerate}
\end{lemma}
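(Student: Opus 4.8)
The plan is to handle the two items separately, extracting each from the behaviour of the inverse-image operator $f^{-1}$ on the layered lattices $\CU\subseteq\Up\subseteq\Sub$. Item~\ref{i:sub-separated} is the quicker of the two. Since $f$ is an injection, it is a monomorphism, so two distinct points $p,q\colon\one\rightrightarrows X$ yield distinct points $pf,qf\colon\one\rightrightarrows Y$. As $Y$ is separated, there is $V\in\CU(Y)$ containing exactly one of $pf,qf$, say $pf\in V$ and $qf\notin V$. Because $f^{-1}$ restricts to a lattice homomorphism $\CU(Y)\to\CU(X)$, the subobject $U\coloneqq f^{-1}(V)$ lies in $\CU(X)$; and by the principle of compositionality a point $r$ of $X$ satisfies $r\in f^{-1}(V)$ if and only if $rf\in V$. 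Hence $p\in U$ while $q\notin U$, so $U$ separates $p$ and $q$, and $X$ is separated.

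For item~\ref{i:image-compact}, the strategy is to pull a witnessing family back along $f$, invoke compactness of $X$, and then push it forward using surjectivity. Let $\Gamma\subseteq\CU(Y)$ be codirected with $\bigwedge\Gamma=0$ in $\Up(Y)$, and set $f^{-1}[\Gamma]\coloneqq\{f^{-1}(V)\mid V\in\Gamma\}\subseteq\CU(X)$. Since $f^{-1}$ is order-preserving and preserves finite meets, $f^{-1}[\Gamma]$ is again codirected. The first key observation is that meets in $\Up$ coincide with meets (intersections) in $\Sub$: as $\Up$ is a reflective sub-poset of $\Sub$, the inclusion is a right adjoint and so preserves meets. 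Thus $\bigwedge\Gamma=0$ means $\bigcap_{V\in\Gamma}V=\bot$ in $\Sub(Y)$. The second observation is that $f^{-1}\colon\Sub(Y)\to\Sub(X)$ is a right adjoint (with left adjoint $f[-]$, from regularity) and that $\Sub(Y)$ is complete because $\cC$ is a cocomplete pretopos; hence $f^{-1}$ preserves this meet, giving $\bigcap_{V\in\Gamma}f^{-1}(V)=f^{-1}(\bot)=\bot$. Reading this back through the same reflectivity argument for $X$ yields $\bigwedge f^{-1}[\Gamma]=0$ in $\Up(X)$, so compactness of $X$ produces some $V\in\Gamma$ with $f^{-1}(V)=0$.

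It then remains to recover $V=0$ from $f^{-1}(V)=0$, which is where surjectivity enters and which I expect to be the only delicate step. Here I would apply the Frobenius rule together with $\im(f)=\top$: since $f$ is a surjection, $f[f^{-1}(V)]=f[\top]\wedge V=\im(f)\wedge V=V$, while $f[-]$, being a left adjoint, preserves the bottom element, so that $V=f[f^{-1}(V)]=f[\bot]=\bot=0$. Therefore $0\in\Gamma$, and $Y$ is compact. The main obstacle throughout is purely bookkeeping: keeping track of the three nested lattices $\CU\subseteq\Up\subseteq\Sub$ and verifying that the relevant meets are computed compatibly in each. Once the reflectivity of $\Up$ in $\Sub$ and the adjunction $f[-]\dashv f^{-1}$ are in place, both the transport of the meet along $f^{-1}$ and the concluding Frobenius computation are routine.
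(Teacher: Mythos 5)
Your proposal is correct and follows essentially the same route as the paper: for compactness, pull the codirected family back along $f^{-1}$ (which preserves upward complemented subobjects and arbitrary infima, being a right adjoint), apply compactness of $X$, and recover $\gamma=0$ from $f^{-1}(\gamma)=0$ via $f[f^{-1}(-)]=\mathrm{id}$; for separation, pull back a separating upward complemented subobject along the monomorphism $f$. The paper phrases the last step of (a) as injectivity of $f^{-1}$ rather than an explicit Frobenius computation, but this is the same fact.
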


\begin{proof}
\ref{i:image-compact} Suppose that $X$ is compact and $f$ is a surjection. Then $f[f^{-1}(A)] = A$ so that $f^{-1} \colon \Sub(Y) \to \Sub(X)$ is injective. Moreover, $f^{-1}$ preserves upward complemented subobjects as well as arbitrary infima, since it has a left adjoint $f[-]$. Hence, if $\Gamma \subseteq \CU(Y)$ is codirected and $\bigwedge \Gamma = 0$ in $\Up(Y)$, then 
\[
\bigwedge \setst{f^{-1}(\gamma)}{\gamma\in\Gamma} = f^{-1}\big(\bigwedge \Gamma\big) = 0
\]
in $\Up(X)$.
Since $X$ is compact, there exists $\gamma\in\Gamma$ with $f^{-1}(\gamma) = 0$, and so $\gamma=0$ because $f^{-1}$ is injective. This shows that $Y$ is compact.
	
\ref{i:sub-separated} Assume that $Y$ is separated and $f$ is an injection. If $p, q \colon \one \to X$ are distinct points, then $f(p) \neq f(q)$ because $f$ is injective. Since $Y$ is separated, there is $U \in \CU(Y)$ containing exactly one of $f(p)$ or $f(q)$. It follows that $f^{-1}(U) \in \CU(X)$ contains exactly one of $p$ or $q$. Therefore, $X$ is separated.
\end{proof}

Finally, we prove that compactness and separation force the maps $\xi_{S}$ to be bijections.

\begin{lemma}\label{l:compact-sep-iso}
For any set $S$, if the copower $S\cdot\one$ is compact and separated, then $\xi_{S}\colon \cC(\one,S\cdot \one) \to \beta(S)$ is a bijection.
\end{lemma}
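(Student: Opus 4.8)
The plan is to unwind the definition of $\xi_S$ through the identification $\CU(S\cdot\one)\cong\P(S)$, and then to prove injectivity and surjectivity separately, the former from separation and the latter from compactness. First I would record the following description of $\xi_S$. By Lemma~\ref{lem:compl-up-tensor}, applied to the discrete poset $S$, we have $\CU(S\cdot\one)\cong\Pos(S,\two)=\P(S)$, under which a subset $T\subseteq S$ corresponds to the upward complemented subobject $\o T$. A point $p\colon\one\to S\cdot\one$ induces the restriction map $p^{-1}\colon\CU(S\cdot\one)\to\CU(\one)$; since $\CU(\one)=\two$ by Lemma~\ref{lem:bivalued}, this is a bounded lattice homomorphism $\P(S)\to\two$, hence a Boolean homomorphism, i.e.\ an ultrafilter. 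By construction this ultrafilter is exactly $\xi_S(p)=\setst{T\subseteq S}{p\in\o T}$. So proving that $\xi_S$ is a bijection amounts to showing that $p\mapsto p^{-1}$ is a bijection onto the ultrafilters of $\P(S)$.

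For injectivity I would invoke separation directly. If $p\neq q$ are two points of $S\cdot\one$, then, as $S\cdot\one$ is separated, there is some $U\in\CU(S\cdot\one)$ containing exactly one of them. Writing $U=\o T$, this means that $T$ lies in exactly one of $\xi_S(p)$ and $\xi_S(q)$, so the two ultrafilters differ and $\xi_S$ is injective.

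The surjectivity direction is where compactness enters. Given an ultrafilter $\mathcal U$ on $S$, I would consider the family $\Gamma\coloneqq\setst{\o T}{T\in\mathcal U}\subseteq\CU(S\cdot\one)$. Because $\mathcal U$ is a filter, $\o{T_1}\land\o{T_2}=\o{T_1\cap T_2}\in\Gamma$, so $\Gamma$ is codirected; and because $\mathcal U$ is proper, $\emptyset\notin\mathcal U$, whence $0=\o\emptyset\notin\Gamma$. Compactness of $S\cdot\one$ then yields, contrapositively, $\bigwedge\Gamma\neq 0$ in $\Up(S\cdot\one)$. Since $\Up(S\cdot\one)$ is a reflective sub-poset of the complete lattice $\Sub(S\cdot\one)$, its meets agree with those computed in $\Sub(S\cdot\one)$, so $W\coloneqq\bigwedge\Gamma$ is a non-empty subobject of $S\cdot\one$ contained in each $\o T$ with $T\in\mathcal U$. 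As $\one$ is a generator, every non-empty object has a point (cf.\ the proof of Lemma~\ref{lem:bivalued}), so there is a point $p\colon\one\to W\into S\cdot\one$. This $p$ factors through every $\o T$ with $T\in\mathcal U$, i.e.\ $\mathcal U\subseteq\xi_S(p)$; since both $\mathcal U$ and $\xi_S(p)$ are ultrafilters, they coincide, and hence $\xi_S(p)=\mathcal U$.

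The step I expect to be the main obstacle is precisely this passage from ``$0\notin\Gamma$'' to the existence of a genuine point lying in every $\o T$. A naive set-theoretic intersection of the $\o T$ need not be witnessed by an actual generalised element $\one\to S\cdot\one$; it is exactly compactness (to guarantee $\bigwedge\Gamma\neq 0$) together with well-pointedness (to extract a point from a non-empty subobject) that closes this gap. The compatibility of the two meets on $\Up$ and $\Sub$, and the reduction of $\CU(S\cdot\one)$ to $\P(S)$ so that $\Gamma$ really does encode the ultrafilter $\mathcal U$, are the bookkeeping details that make the argument go through.
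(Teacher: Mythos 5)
Your proof is correct and takes essentially the same route as the paper: separation yields injectivity via an upward complemented subobject $\o{T}$ distinguishing the two points, and compactness applied to the codirected family $\setst{\o{T}}{T\in\mu}$ yields surjectivity by extracting a point of the non-zero meet and using maximality of ultrafilters. The bookkeeping you make explicit (codirectedness of the family, agreement of meets in $\Up$ and $\Sub$, and well-pointedness to produce the point) is left implicit in the paper but is precisely what is needed.
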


\begin{proof}
	We claim that if $S\cdot\one$ is compact, then $\xi_S$ is surjective. Let $\mu \in \beta(S)$ be an ultrafilter. For any $T \in \mu$, the upward complemented subobject $\o{T} \subseteq S\cdot\one$ is non-empty since it contains at least a point coming from $T$. By compactness of $S\cdot\one$, the intersection of all the $\o{T}$ for $T \in \mu$ is non-empty, so it has a point, whose image under $\xi_S$ is $\mu$.
	
	Next, we claim that if $S\cdot\one$ is separated, then $\xi_S$ is injective. Given distinct points $p, q \colon \one \to S\cdot\one$, there is an upward complemented subobject $U\subseteq S\cdot\one$ containing, without loss of generality, $p$ and not $q$. By Lemma~\ref{lem:compl-up-tensor}, $U$ is of the form $\o{T}$ for some $T \subseteq S$. Such a $T$ is in $\xi_S(p)$ but not in $\xi_S(q)$, therefore $\xi_S(p) \neq \xi_S(q)$.
\end{proof}

We are now in a position to characterise $\KOrd$ among the poset-enriched categories. As previously mentioned, $\KOrd$ is a (non-degenerate) pretopos. Moreover, the $S$-indexed copower of its terminal object is $\beta(S)$, which is order-filtral, and each compact ordered space is covered by the Stone--\v{C}ech compactification of a set. Thus, $\KOrd$ satisfies the properties in Theorem~\ref{thm:main-KOrd}. We show that any category that satisfies these properties is equivalent to $\KOrd$, thus concluding the proof of the latter theorem.
\begin{theorem}\label{th:KOrd-characterisation}
Let $\C$ be a non-degenerate pretopos satisfying the following properties:
\begin{enumerate}[label=(\roman*)]
\item\label{i:regular-gen} The terminal object $\one$ of $\C$ is a discrete generator;
\item Every object of $\C$ is covered by an order-filtral object.
\end{enumerate}
Then $\C$ is equivalent to $\KOrd$. 
\end{theorem}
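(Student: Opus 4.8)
The plan is to identify $\C$ with $\KOrd$ by comparing projective covers, so the first task is to place ourselves in the setting of Section~\ref{s:well-pointed-pretoposes}. Since $\C$ is a non-degenerate pretopos, it is in particular exact, so Proposition~\ref{p:gen-iff-dicrete-gen-and-coinserters}\ref{i:dis-gen-implies-gen} upgrades the discrete generator $\one$ to a genuine generator; this generator is projective by Lemma~\ref{l:one-proj}, and it makes $\C$ well-pointed and cocomplete by Proposition~\ref{prop:well-pointed-cocomplete}. In particular all copowers $S\cdot\one$ exist, each is projective by Lemma~\ref{projective-tensors}, and by Lemma~\ref{l:projective-discrete-gen-cover} the full subcategory $\setst{S\cdot\one}{S\in\Set}$ is a projective cover of $\C$. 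The analogous facts hold for $\KOrd$, whose copowers of $\one$ are the spaces $\beta(S)$, so that $\setst{\beta(S)}{S\in\Set}$ is a projective cover of $\KOrd$. Since $\C$ is non-degenerate with $\one$ a generator, the functor $\xi$ of eq.~\eqref{eq:functor-xi} is available and is precisely a functor between these two covers; by Corollary~\ref{c:proj-cover-equi} it will therefore be enough to prove that $\xi$ is an equivalence.

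By Lemma~\ref{lem:S-equiv}, $\xi$ is an equivalence exactly when each $\xi_S\colon \C(\one,S\cdot\one)\to\beta(S)$ is a bijection, and by Lemma~\ref{l:compact-sep-iso} this holds as soon as every copower $S\cdot\one$ is \emph{compact} and \emph{separated}. The entire proof thus reduces to verifying these two properties of the objects $S\cdot\one$, which is where hypothesis (ii) enters. For a fixed set $S$, I would choose an order-filtral object $X$ together with a surjection $q\colon X\epi S\cdot\one$; by Proposition~\ref{p:filtral-iff-compact-sep}, such an $X$ is both compact and separated.

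Compactness of $S\cdot\one$ is then immediate, as it is the surjective image of the compact object $X$, so Lemma~\ref{l:properties-compact-sep}\ref{i:image-compact} applies directly. I expect the separation of $S\cdot\one$ to be the main obstacle: separation only transfers along injections (Lemma~\ref{l:properties-compact-sep}\ref{i:sub-separated}), whereas the order-filtral cover provides a surjection \emph{onto} $S\cdot\one$, running the wrong way, and a surjective image of a separated object need not be separated. The observation that unlocks this is that $S\cdot\one$ is itself projective, so the cover $q$ must split: there is a section $s\colon S\cdot\one\to X$ with $sq=\id$. Being a split monomorphism, $s$ is an embedding by Example~\ref{ex:split-mono-emb}, in particular an injection, so $S\cdot\one$ is realised as a subobject of the separated object $X$, and Lemma~\ref{l:properties-compact-sep}\ref{i:sub-separated} then yields that $S\cdot\one$ is separated. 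With both compactness and separation established for every $S$, Lemma~\ref{l:compact-sep-iso} makes each $\xi_S$ a bijection, Lemma~\ref{lem:S-equiv} makes $\xi$ an equivalence, and Corollary~\ref{c:proj-cover-equi} finally gives $\C\simeq\KOrd$.
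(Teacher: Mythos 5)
Your proposal is correct and follows essentially the same route as the paper's own proof: reduce to comparing the projective covers $\setst{S\cdot\one}{S\in\Set}$ and $\setst{\beta(S)}{S\in\Set}$ via Corollary~\ref{c:proj-cover-equi}, verify compactness of $S\cdot\one$ by transferring it along the order-filtral cover, and verify separation by using projectivity of $S\cdot\one$ to split the cover and apply Lemma~\ref{l:properties-compact-sep}\ref{i:sub-separated} to the resulting injection. The key observation you single out (that the cover splits, turning the ``wrong-way'' surjection into an injection) is exactly the step the paper uses.
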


\begin{proof}
Recall from Proposition~\ref{p:gen-iff-dicrete-gen-and-coinserters} that, in an exact category, an object is a generator if and only if it is a discrete generator. By Lemmas~\ref{l:projective-discrete-gen-cover} and~\ref{l:one-proj}, $\setst{S\cdot\one}{S\in\Set}$ is a projective cover of $\cC$, and $\setst{\beta(S)}{S\in\Set}$ is a projective cover of $\KOrd$. In view of Corollary~\ref{c:proj-cover-equi}, it is thus sufficient to show that the functor
\[ 
\xi \colon \setst{S\cdot\one}{S\in\Set} \to \setst{\beta(S)}{S\in\Set}
\]
from eq.~\eqref{eq:functor-xi} is an equivalence. By Lemmas~\ref{lem:S-equiv} and~\ref{l:compact-sep-iso}, this holds if the copower $S\cdot \one$ is compact and separated for each set $S$.

	Let $f\colon X\epi S\cdot \one$ be a surjection whose domain is order-filtral. The pretopos $\cC$ is well-pointed and cocomplete by Proposition~\ref{prop:well-pointed-cocomplete}, hence $X$ is compact and separated by Proposition~\ref{p:filtral-iff-compact-sep}. It follows from Lemma~\ref{l:properties-compact-sep}\ref{i:image-compact} that $S\cdot\one$ is compact. Moreover, since $S\cdot\one$ is projective, there exists $s\colon S\cdot\one \to X$ such that $sf = \id$. Such a split mono $s$ is an injection, thus $S\cdot \one$ is separated by Lemma~\ref{l:properties-compact-sep}\ref{i:sub-separated}.
\end{proof}

\begin{remark}
Direct inspection shows that Theorem~\ref{thm:main-KOrd} can be slightly strengthened by replacing order-filtral objects with compact separated ones.
\end{remark}


\Urlmuskip=0mu plus 1mu\relax
{\printbibliography}

\end{document}